\newtheorem{theorem}{Theorem}
\newtheorem*{mtheorem}{Main Theorem}
\newtheorem{lemma}{Lemma}
\newtheorem{remark}{Remark}
\newtheorem{corollary}{Corollary}
\newcommand{\e}{\varrho}
\newcommand{\R}{\mathbb{R}}
\newcommand{\Ha}{\mathcal{H}}
\newcommand{\Hh}{\mathscr{H}} 
\newcommand{\Sf}{\mathbb{S}}
\newcommand{\F}{\mathcal{F}}
\renewcommand{\div}{\mathrm{div}}
\newcommand{\la}{\langle}
\newcommand{\ra}{\rangle}
\newcommand{\unit}{\omega}
\newcommand{\pa}{\partial}
\mathchardef\emptyset="001F
\title[Symmetry of minimizers]
{Symmetry of minimizers of a Gaussian \\ isoperimetric problem}
\author[M. Barchiesi and V. Julin]
{Marco Barchiesi and Vesa Julin}
\address[M. Barchiesi]{Universit\`{a} di Napoli Federico II,
Dipartimento di Matematica e Applicazioni,
Via Cintia, Monte Sant'Angelo, I-80126 Napoli, Italy}
\email{barchies@gmail.com}
\address[V. Julin]{University of Jyv\"{a}skyl\"{a},
Department of Mathematics and Statistics,
P.O.Box 35 (MaD) FI-40014, Finland}
\email{vesa.julin@jyu.fi}
\date{May 8, 2018}
\begin{document}
\maketitle

%%%%%%%%%%%%%%%%%%%%%%%%%%%%%%%%%%%%%%%%%%%%%%%%%%%%%%%%%%%%%%%%%%%%%%%%%%%%%%%%%%%%%%%%%%%%%%
\begin{center}
\begin{minipage}{13cm}
\small{
\noindent {\bf Abstract.} 
We study an isoperimetric problem described by a functional that consists of the standard Gaussian perimeter and  
the norm of the barycenter. This second term has a repulsive effect, and it is in competition with the perimeter.
Because of that, in general the solution is not the  half-space.
We characterize all the minimizers of this functional, when the volume is close to one, by proving that the minimizer 
is either the half-space or the symmetric strip, depending  on  the strength of the repulsive term.  
As a corollary, we obtain  that the symmetric strip is the  solution of the Gaussian isoperimetric problem
among symmetric sets when the volume is close to one.

\bigskip
\noindent {\bf 2010 Mathematics Subject Class.} 
49Q20, %Variational problems in a geometric measure-theoretic setting
60E15. %Inequalities; stochastic orderings 
}
\end{minipage}
\end{center}

\bigskip

%%%%%%%%%%%%%%%%%%%%%%%%%%%%%%%%%%%%%%%%%%%%%%%%%%%%%%%%%%%%%%%%%%%%%%%%%%%%%%%%%%%%%%%%%%%%%%%%%%%%%
\tableofcontents

%%%%%%%%%%%%%%%%%%%%%%%%%%%%%%%%%%%%%%%%%%%%%%%%%%%%%%%%%%%%%%%%%%%%%%%%%%%%%%%%%%%%%%%%%%%%%%%%%%%%%%%%%%%%%%%%%%%%%%%

\section{Introduction}

\noindent 
The Gaussian  isoperimetric inequality  (proved by Borell~\cite{Bor} and  Sudakov-Tsirelson \cite{SuCi})  
states that among all sets with given Gaussian measure the half-space has the smallest Gaussian perimeter. 
Since the half-space is not symmetric with respect to the origin, a natural question is to restrict the problem among sets 
which are symmetric, i.e., either central symmetric ($E = -E$) or coordinate wise symmetric ($n$-symmetric).
This problem  turns out to be rather difficult as every known method that  has been used to prove  the Gaussian   
isoperimetric inequality, such as symmetrization \cite{Ehr}  and the Ornstein-Uhlenbeck semigroup argument \cite{BL},   
seems to fail. In fact, at the moment  it is not even clear what   the solution to this problem should be.

The Gaussian  isoperimetric problem for symmetric sets or its generalization to Gaussian noise is stated as an open problem  
in \cite{CR, OP}. In the latter it was conjectured that the solution should be the ball or its complement, but this  was 
recently disproved in \cite{He}. Another natural candidate for the solution is the symmetric strip or its complement. 
Indeed, in \cite{Barthe01} Barthe proved that if one replaces the standard Gaussian perimeter by  a certain anisotropic perimeter,  
the solution of the isoperimetric problem among $n$-symmetric sets is the symmetric strip or its complement. 
We mention also a somewhat similar result by Latala and Oleszkiewicz  \cite[Theorem 3]{LO} who proved that the symmetric 
strip minimizes the Gaussian perimeter weighted with the width of the set among 
convex and symmetric sets with volume constraint.  For the standard perimeter the problem is more difficult 
as a simple energy comparison shows (see \cite{He2}) that when the volume is exactly one half, the two-dimensional disk  
and the three-dimensional ball have both  smaller perimeter than the symmetric strip in dimension two and three, respectively. 
Similar difficulty appears also in the isoperimetric problem on sphere for symmetric sets, where it is known that the union of two 
spherical caps does not always have the smallest surface area (see \cite{Barthe01}). However, it might still be the case that the 
 solution of the problem is a cylinder $B_r^k \times \R^{n-k}$, or its complement, for some $k$ depending on the volume
(see \cite[Conjecture 1.3]{He2}). Here $B_r^k$ denotes the $k$-dimensional ball with radius $r$. At least the results 
by Heilman \cite{He,He2} and La Manna \cite{LaM} seem to indicate this. 

To the best of the authors knowledge there are no other results directly related to this problem. 
In \cite{CM}  Colding and Minicozzi introduced the Gaussian entropy, which is defined for sets as
\[
\Lambda(\pa E) = \sup_{x_0 \in \R^n, t_0 >0}P_\gamma(t_0^{-1}(E-\{x_0\})),
\]
where $P_\gamma$ is the Gaussian perimeter defined below. The Gaussian entropy is important as it is decreasing under the 
mean curvature flow and for this reason  in \cite{CM}  the authors studied sets which are stable for the Gaussian entropy.   
It was conjectured in \cite{CIMW} that the sphere minimizes the entropy among closed hypersurfaces (at least  in low dimensions). 
This was proved by Bernstein and Wang in \cite{BW}  in low dimensions and more recently by Zhu \cite{Zhu}  in every dimension. 
This problem is related to the symmetric Gaussian problem since the Gaussian entropy of a self-shrinker equals its Gaussian perimeter.

In this paper we prove that the symmetric strip is the solution of the Gaussian isoperimetric problem for symmetric set when 
the volume is close to one. (Similarly, its complement is the solution when the volume is close to zero). Our proof is direct 
and thus we could  give an explicit estimate on how close to one the volume has to be. In particular, the bound on the volume is independent of the 
dimension. But as our proof is rather long and the  
bound on the volume is obtained after numerous inequalities, we prefer to state the result in a more qualitative way in order 
to avoid heavy computations.

In order to describe the main result  more precisely, we introduce our setting.  Given a Borel set $E\subset\R^n$, $\gamma(E)$ 
denotes its Gaussian measure, defined as
\begin{equation*}
\gamma(E):=\frac{1}{(2\pi)^\frac{n}{2}}\int_E e^{-\frac{|x|^2}{2}}dx.
\end{equation*}
If $E$ is an open set with Lipschitz boundary, $P_\gamma(E)$ denotes its \emph{Gaussian perimeter}, defined as 
\begin{equation}\label{gauss perim}
P_\gamma(E):=\frac{1}{(2\pi)^\frac{n-1}{2}}\int_{\partial E}e^{-\frac{|x|^2}{2}}d\Ha^{n-1}(x),
\end{equation}
where $\Ha^{n-1}$ is the $(n-1)$-dimensional Hausdorff measure. We define the (non-renormalized) barycenter  of a set $E$ as
\[
b(E) := \int_E x \, d\gamma(x)
\]
and  define the function $\phi:\R\rightarrow(0,1)$ as 
\begin{equation*}
\phi(s):=\frac{1}{\sqrt{2\pi}}\int_{-\infty}^s e^{-\frac{t^2}{2}}dt.
\end{equation*}
Moreover, given $\unit \in\Sf^{n-1}$ and $s \in \R$,  $H_{\unit,s}$ denotes the half-space of the form
\begin{equation*}
H_{\unit,s}:=\{x\in\R^n \text{ : } \la x,\unit\ra < s\},
\end{equation*}
while $D_{\unit,s}$ denotes the symmetric strip 
\begin{equation*}
D_{\unit,s}:=\{x\in\R^n \text{ : }  | \la x,\unit\ra| <a(s)\}, 
\end{equation*}
where   $a(s)>0$ is chosen such that $\gamma(H_{\unit,s})=\gamma(D_{\unit,s})$.

We approach the problem by studying  the minimizers   of the functional
\begin{equation}\label{key problem}
\F(E):=P_\gamma(E)+  \e \, \sqrt{\pi/2}\,  |b(E)|^2 
\end{equation} 
under the volume constraint  $\gamma(E) = \phi(s)$. Note that the isoperimetric inequality implies that for $\e=0$ the half-space 
is the only minimizer of \eqref{key problem}, while  it is easy to see that the quantity $|b(E)|$ is maximized by the half-space. 
Therefore the two terms in \eqref{key problem} are in competition and we call the barycenter term repulsive, as it prefers to balance 
the volume around the origin. It is  proven in \cite{Bar14, EL} that when $\e$ is small, the half-space is still  the  only minimizer 
of \eqref{key problem}. This result implies the quantitative Gaussian isoperimetric inequality (see also \cite{CFMP, MN, MN2, Bar16}). 
It is clear that when we keep increasing the value $\e$, there is a threshold, say $\e_s$, such that for $\e >\e_s$ the half-space  
$H_{\unit,s}$ is no longer the minimizer of \eqref{key problem}. In this paper we are interested in characterizing the minimizers of    
\eqref{key problem} after this threshold. Our main result reads as follows. 

\begin{mtheorem}
There exists $s_0>0$ such that the following holds: when $s \geq s_0$ there is a threshold $\e_s$ such that for $\e\in[0,\e_s)$ the minimizer
of \eqref{key problem} under  volume constraint  $\gamma(E) = \phi(s)$ is the half-space $H_{\omega,s}$, while for $\e\in(\e_s, \infty)$ 
the minimizer is the symmetric strip $D_{\omega,s}$.
\end{mtheorem}

As a corollary this  provides the solution for the symmetric Gaussian problem, because symmetric sets have barycenter zero. 
\begin{corollary}
There exists $s_0>0$ such that for  $s \geq s_0$ it holds 
\begin{equation*}
P_\gamma(E)\geq P_\gamma(D_{\omega,s}) = \Bigl( 1 + \frac{\ln 2}{s^2} +  o(1/s^2) \Bigr) e^{-\frac{s^2}{2}},
\end{equation*}
for any symmetric set $E$ with volume $\gamma(E)=\phi(s)$, and the equality holds if and only if $E=D_{\omega,s}$ for some $\unit\in\Sf^{n-1}$.
\end{corollary}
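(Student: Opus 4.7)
The plan is to derive the corollary as a direct consequence of the Main Theorem, combined with an asymptotic computation for the Gaussian perimeter of the strip. The crucial observation is that any symmetric set $E$ (either central symmetric or $n$-symmetric) satisfies $b(E)=0$, and the symmetric strip $D_{\omega,s}$ also has $b(D_{\omega,s})=0$. Thus the repulsive term in $\F$ vanishes on both, and $\F$ coincides with $P_\gamma$ on the relevant class of competitors.

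Fixing $s\geq s_0$ and choosing any $\e>\e_s$, the Main Theorem tells us that $D_{\omega,s}$ minimizes $\F$ under the volume constraint $\gamma(E)=\phi(s)$, uniquely up to the choice of direction $\omega\in\Sf^{n-1}$. Consequently, for every symmetric $E$ with $\gamma(E)=\phi(s)$,
\[
P_\gamma(E)=\F(E)\geq \F(D_{\omega,s})=P_\gamma(D_{\omega,s}),
\]
with equality if and only if $E=D_{\omega',s}$ for some $\omega'\in\Sf^{n-1}$. This yields both the inequality and the characterization of equality.

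It remains to extract the asymptotic form of $P_\gamma(D_{\omega,s})$. Since $\pa D_{\omega,s}$ consists of two parallel hyperplanes $\{\la x,\omega\ra=\pm a(s)\}$, the definition \eqref{gauss perim} gives $P_\gamma(D_{\omega,s})=2e^{-a(s)^2/2}$. The width $a(s)$ is fixed by $\gamma(D_{\omega,s})=2\phi(a(s))-1=\phi(s)$, equivalently $1-\phi(a(s))=\tfrac{1}{2}(1-\phi(s))$. Inserting the standard Mills-type expansion $1-\phi(t)=\frac{1}{\sqrt{2\pi}\,t}e^{-t^2/2}\bigl(1-1/t^2+O(1/t^4)\bigr)$ and solving for $a(s)$ to the required order yields
\[
\frac{a(s)^2-s^2}{2}=\ln 2-\frac{\ln 2}{s^2}+o(1/s^2),
\]
whence $P_\gamma(D_{\omega,s})=2e^{-s^2/2}e^{-(a(s)^2-s^2)/2}=e^{-s^2/2}\bigl(1+\ln 2/s^2+o(1/s^2)\bigr)$.

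All real difficulty in the corollary is absorbed into the Main Theorem; once that theorem is available, the argument above is essentially a bookkeeping step plus an elementary asymptotic expansion of $\phi$, so no serious obstacle remains at this stage of the paper.
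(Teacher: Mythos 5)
Your argument is correct and matches the paper's: the corollary is deduced exactly as you do, by noting that symmetric sets (and the strip) have barycenter zero so that $\F=P_\gamma$ on this class, and then invoking the Main Theorem for some $\e>\e_s$, with the expansion of $P_\gamma(D_{\omega,s})$ being the estimate the paper already establishes in Section 2. Your derivation of that expansion via the Mills-type expansion of $1-\phi$ is only a cosmetic variant of the paper's computation from the mass-balance identity, so the approaches are essentially the same.
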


\medskip
Another corollary of the theorem is  the optimal constant in  the quantitative Gaussian 
isoperimetric inequality (see  \cite{Bar14, EL}) when the volume is close to one. Let us denote by $\beta(E)$ the strong asymmetry
\begin{equation*}
\beta(E):=\min_{\unit\in\Sf^{n-1}}\big|b(E)-b(H_{\unit,s})\big|,
\end{equation*}
which measures the distance between a set $E$  and the family of half-spaces.

\begin{corollary} \label{quanti}
There exists $s_0>0$ such that for  $s \geq s_0$ it holds
\begin{equation*}
P_\gamma(E) - P_\gamma(H_{\unit,s}) \geq  c_s \beta(E), 
\end{equation*}
for every set  $E$ with volume $\gamma(E)=\phi(s)$. The optimal constant is given by 
\begin{equation*}
c_s = \sqrt{2 \pi } \,e^{s^2/2} \, \left( P_\gamma(D_{\omega,s}) - P_\gamma(H_{\unit,s}) \right) 
=  \sqrt{2 \pi } \,  \frac{\ln 2}{s^2} +  o(1/s^2). 
\end{equation*}
\end{corollary}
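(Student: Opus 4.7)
The plan is to deduce the quantitative inequality directly from the Main Theorem by evaluating $\F$ at the threshold $\e=\e_s$. By the Main Theorem, for every $\e\geq 0$ the minimum of $\F$ over sets of volume $\phi(s)$ equals
\[
\min\bigl\{ P_\gamma(H_{\unit,s})+\e\sqrt{\pi/2}\,m(s)^2,\; P_\gamma(D_{\omega,s})\bigr\},
\]
where for brevity I set $m(s):=|b(H_{\unit,s})|$. The first expression is linear increasing in $\e$, the second is constant, so they must cross exactly at $\e=\e_s$; this pins down the identity
\[
\e_s\sqrt{\pi/2}\,m(s)^2 \,=\, P_\gamma(D_{\omega,s})-P_\gamma(H_{\unit,s}),
\]
and guarantees that at $\e=\e_s$ both $H_{\unit,s}$ and $D_{\omega,s}$ are minimizers of $\F$.

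Next, I would apply $\F(E)\geq\F(H_{\unit,s})$ at $\e=\e_s$ to an arbitrary competitor $E$ with $\gamma(E)=\phi(s)$, which rearranges to
\[
P_\gamma(E)-P_\gamma(H_{\unit,s}) \,\geq\, \e_s\sqrt{\pi/2}\,\bigl(m(s)-|b(E)|\bigr)\bigl(m(s)+|b(E)|\bigr).
\]
A short bathtub-principle argument yields $|b(E)|\leq m(s)$ (the half-space maximizes the barycenter norm at fixed Gaussian measure), and a direct minimization of $|b(E)-b(H_{\unit,s})|=|b(E)+m(s)\unit|$ over $\unit\in\Sf^{n-1}$ delivers $\beta(E)=m(s)-|b(E)|$. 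Dropping the factor $|b(E)|$ from $m(s)+|b(E)|$ and then substituting the explicit value $m(s)=(2\pi)^{-1/2}e^{-s^2/2}$ along with the identity for $\e_s$ above collapses the coefficient to precisely $c_s=\sqrt{2\pi}\,e^{s^2/2}\bigl(P_\gamma(D_{\omega,s})-P_\gamma(H_{\unit,s})\bigr)$.

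Sharpness of $c_s$ is then immediate: testing with $E=D_{\omega,s}$, symmetry gives $b(D_{\omega,s})=0$, so $\beta(D_{\omega,s})=m(s)$ and every inequality in the chain becomes an equality. The asymptotic expansion of $c_s$ drops out of the expansion for $P_\gamma(D_{\omega,s})$ from the previous corollary combined with $P_\gamma(H_{\unit,s})=e^{-s^2/2}$. The only delicate ingredient, which I would address first, is legitimizing the crossing identity, i.e.\ closing up the Main Theorem at the endpoint $\e=\e_s$; this requires little more than the lower semicontinuity of $P_\gamma$ under $L^1$ convergence together with the clean dichotomy provided by the Main Theorem on either side of $\e_s$. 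I therefore do not expect an essential difficulty beyond the Main Theorem itself, the proof being in spirit a short algebraic consequence of it.
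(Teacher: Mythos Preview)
Your argument is correct and is precisely the deduction the paper has in mind: the corollary is stated without proof as an immediate consequence of the Main Theorem, and the crossing identity you use is exactly the one recorded in Section~\ref{notation} (where the authors compute $P_\gamma(D_{\unit,s}) = e^{-s^2/2} + \tfrac{\e_s}{2\sqrt{2\pi}}e^{-s^2}$). Your handling of the endpoint $\e=\e_s$, the bathtub bound $|b(E)|\le m(s)$, the identification $\beta(E)=m(s)-|b(E)|$, and the sharpness test with $E=D_{\omega,s}$ are all sound and require no further input.
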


\medskip
It would be interesting to obtain a result analogous to Corollary \ref{quanti}  in the Euclidean setting, where the minimization 
problem which corresponds   to \eqref{key problem} was introduced in \cite{FJ}, and on the sphere \cite{BDF}. 
The motivation for this is that, by the result of the second author \cite{vesku}, the optimal constant for the quantitative 
Euclidean isoperimetric inequality implies an estimate on the range of volume where the ball is the minimizer of the Gamov's 
liquid drop model \cite{Gamov}. This is a classical model used in nuclear physics and has gathered a lot  attention in  
mathematics  in recent years \cite{CMT, CP, KM}.  We also refer to the survey paper \cite{F}  for the state-of-the-art in the 
quantitative  isoperimetric  and other functional inequalities.    

The main idea of the proof is to study the functional \eqref{key problem} when the parameter $\e$ is within a carefully chosen range 
 $(\e_{s,1},\e_{s,2})$, and to prove that within this range the only local minimizers, which satisfy certain perimeter bounds,  are 
the half-space $H_{\unit,s}$ and the symmetric strip $D_{\unit,s}$. We have to  choose the lower bound $\e_{s,1}$  large enough so 
that the symmetric strip is a local minimum of \eqref{key problem}. On the other hand, we have to choose  the upper bound  $\e_{s,2}$ 
small enough  so that no other local minimum than $H_{\unit,s}$ and  $D_{\unit,s}$ exist. Naturally also the threshold value $\e_s$ has 
to be within the range $(\e_{s,1},\e_{s,2})$. 

Our proof is based on reduction argument where we reduce the dimension of the problem from $\R^n$ to $\R$. First, we develop further 
our ideas from \cite{Bar14} to   reduce the problem from $\R^n$ to $\R^2$ by  a rather short argument.  In this step it is crucial that  
we are not constrained to keep the sets symmetric.  The main challenge is  thus to prove the theorem in $\R^2$, since here  
we cannot apply the previous reduction argument anymore. Instead, we use an ad-hoc argument to reduce the problem from $\R^2$ to $\R$ 
essentially by PDE type estimates from the Euler equation and from the stability condition. We give an independent overview of 
this argument at the beginning of the proof of Theorem \ref{2 to 1} in Section \ref{section redux 2 to 1}. 
Finally, we solve the problem in $\R$ by a direct argument.

%%%%%%%%%%%%%%%%%%%%%%%%%%%%%%%%%%%%%%%%%%%%%%%%%%%%%%%%%%%%%%%%%%%%%%%%%%%%%%%%%%%%%%%%%%%%%%%%%%%%%%%%%%%%%%%%%%%%

\section{Notation and set-up}\label{notation}

\noindent
In this section we briefly introduce our notation and discuss about preliminary results and  estimates. 
\emph{We remark  that throughout the paper the parameter $s$, associated with the volume, is assumed to be large 
even if not explicitly mentioned.  In particular, our estimates are understood to hold when  $s$ is chosen to be large enough.}
$C$ denotes a numerical constant which may vary from line to line.

We denote the $(n-1)$-dimensional Hausdorff measure with Gaussian weight by $\Ha^{n-1}_\gamma$, i.e.,  for every Borel set $A$ we define
\begin{equation*}
\Ha^{n-1}_\gamma(A):=\frac{1}{(2\pi)^\frac{n-1}{2}}\int_{A}e^{-\frac{|x|^2}{2}}d\Ha^{n-1}.
\end{equation*}
We minimize the functional  \eqref{key problem} among sets with locally finite perimeter and have the existence of a minimizer for 
every $\e$ by  an argument similar to \cite[Proposition 1]{Bar14}. If $E \subset \R^n$ is a set  of locally finite perimeter we denote 
its reduced boundary by $\partial^*E$ and define its Gaussian perimeter by 
\[
P_\gamma(E) := \Ha^{n-1}_\gamma(\partial^* E).
\]
We denote the generalized exterior normal by $\nu^E$ which is defined on  $\partial^* E$. As introduction to the theory of sets of 
finite perimeter and perimeter minimizers we refer to \cite{Ma}. 

If the reduced boundary $\partial^* E$  is a smooth hypersurface we denote  the second fundamental form by $B_E$ and the mean curvature 
by $\Hh_E$, which for us is the sum of the principle curvatures. We adopt the notation from \cite{Giusti} and define the tangential gradient 
of a function $f$, defined in a neighborhood of  $\partial^* E$, by $\nabla_\tau f := \nabla f - (\nabla f \cdot \nu^E) \nu^E$. 
Similarly, we define the tangential divergence of a vector field by $\div_\tau X := \div X - \la DX \nu^E, \nu^E \ra  $  and the 
Laplace-Beltrami operator as $\Delta_\tau f := \div_\tau (\nabla_\tau f)$. 
The divergence theorem on $\partial^* E$   implies that for every vector field $X \in C_0^1(\partial^* E ; \R^n)$ it holds 
\[
\int_{\partial^* E } \div_\tau X \, d \Ha^{n-1} = \int_{\partial^* E }\Hh_E \la X , \nu^E\ra  \, d \Ha^{n-1}. 
\]
If $\partial^* E$  is a smooth hypersurface, we may extend any function $f \in C_0^1(\partial^* E)$ to a neighborhood of  
$\partial^* E$ by the distance function. For simplicity we will omit to indicate the dependence on the set $E$ when this is clear, 
by simply writing $\nu=\nu^E$, $\Hh=\Hh_E$ etc... 

We denote the mean value of a function $f : \partial^* E \to \R$ by 
\[
\bar{f} := \fint_{\partial^* E} f \, \Ha^{n-1}_\gamma,
\]
and  its  average over a subset $\Sigma \subset \partial^* E$ by 
\[
(f)_{\Sigma} := \fint_{\Sigma} f \, \Ha^{n-1}_\gamma. 
\]
We recall that  for every number $a \in \R$ it holds
\[
\int_{\Sigma} (f - (f)_{\Sigma})^2 \, d \Ha_\gamma^1 \leq \int_{\Sigma} (f - a)^2 \, d \Ha_\gamma^1.
\]

Recall that $H_{\unit,s}$ denotes the half-space  $\{x\in\R^n \text{ : } \la x,\unit\ra < s\}$ and $D_{\unit,s}$ denotes the symmetric strip 
$\{x\in\R^n \text{ : }  | \la x,\unit\ra| <a(s)\}$, where $a(s)$ is chosen such that   $\gamma(D_{\unit,s})  = \gamma(H_{\unit,s}) =  \phi(s)$.   
Since we are assuming that $s$ is large, it is important to know the asymptotic behavior of  the quantities
$\phi(s)$, $a(s)$, and $P(D_{\unit,s})$. A simple analysis shows that 
\begin{equation}\label{estimate1}
\phi(s) =  1- \frac{1}{\sqrt{2 \pi}} \left(\frac{1}{s}+ o(1/s^2) \right) e^{-\frac{s^2}{2}} .
\end{equation}
The asymptotic behavior of $a(s)$ is a slightly more complicated. We will  show that
\begin{equation}\label{estimate2}
a(s) = s + \frac{\ln 2}{s} + o(1/s).
\end{equation}
To this aim  we write $a(s)= s + \delta(s)$, so that \eqref{estimate2} is equivalent to $\lim_{s\to\infty}s\delta(s)=\ln 2$.
 We argue by contradiction  and assume that $\lim_{s\to\infty} s\delta(s)>\ln 2$. By the volume constraint
\begin{equation}\label{balance mass}
2\int_{a(s)}^\infty e^{-\frac{t^2}{2}}\, dt =\int_{s}^\infty e^{-\frac{t^2}{2}}\, dt
\end{equation}
and therefore
\[
\begin{split}
1< \lim_{s \to \infty} 
\frac{2\int_{s+\ln 2/s}^\infty e^{-\frac{t^2}{2}}\, dt}{\int_{s}^\infty e^{-\frac{t^2}{2}}\, dt}
=\lim_{s \to \infty} \frac{-2(1-\frac{\ln 2}{s^2}) e^{-\frac{(s + \ln 2/s)^2}{2}}}{-e^{-\frac{s^2}{2}}} = 1,
\end{split}
\]
which is a contradiction. We arrive to a similar  contradiction if $\lim_{s\to\infty} s\delta(s)<\ln 2$. Therefore we have \eqref{estimate2}. 

For the perimeter of the strip $D_{\unit,s}$ we have the following estimate:
\begin{equation}\label{estimate PD}
P_\gamma(D_{\unit,s}) = \Bigl( 1 + \frac{\ln 2}{s^2} +  o(1/s^2) \Bigr) e^{-\frac{s^2}{2}} .
\end{equation}
Indeed, since $P_\gamma(D_{\unit,s})=2e^{-a(s)^2/2}$, by differentiating \eqref{balance mass} 
we get $(1+\delta')P_\gamma(D_{\unit,s})=e^{-s^2/2}$. Moreover, by $\lim_{s\to\infty}s\delta(s)=\ln 2$
we get $\lim_{s\to\infty}s^2\delta'(s)=-\ln 2$ and thus
\begin{equation*}\begin{split}
&\lim_{s\to\infty}s^2e^{\frac{s^2}{2}} \Bigl(P_\gamma(D_{\unit,s})-\Bigl( 1 + \frac{\ln 2}{s^2}\Bigr) e^{-\frac{s^2}{2}}\Bigr)\\
&=-\lim_{s\to\infty}s^2\Bigl(\frac{\delta'}{1+\delta'}+\frac{\ln 2}{s^2}\Bigr)=0.
\end{split}\end{equation*}
In particular, according to our main theorem   the threshold value  has the asymptotic behavior 
\begin{equation}\label{choice epsilon down}
\e_s = 2 \ln 2 \,  \frac{ \sqrt{2 \pi}}{ s^2}  e^{\frac{s^2}{2}} \left( 1+ o(1) \right).
\end{equation}
This follows from the fact that threshold value $\e_s$ is the unique value of $\e$ for which the functional \eqref{key problem} 
satisfies   $\mathcal{F} (H_{\unit,s})=  \mathcal{F} (D_{\unit,s})$, i.e., 
\[
P_\gamma(D_{\unit,s}) = e^{-\frac{s^2}{2}} + \frac{\e_s}{2 \sqrt{2 \pi}}  e^{-s^2}
\]
by taking into account that $|b(H_{\unit,s})|=e^{-s^2/2}/\sqrt{2\pi}$.

In order to simplify the upcoming technicalities we  replace the volume constraint in the original functional \eqref{key problem} 
with a volume penalization. We redefine  $\F$  for any set of locally finite perimeter  as
\begin{equation}\label{functional}
\mathcal{F} (E) :=  P_\gamma(E)+ \e\sqrt{\pi/2}\,|b(E)|^2 
+ \Lambda\sqrt{2\pi}\, \big| \gamma(E) - \phi(s) \big|,
\end{equation}
where we choose 
\begin{equation}\label{choice lambda}
\Lambda= s + 1.
\end{equation}
As with the original functional the existence of a minimizer of \eqref{functional} follows from \cite[Proposition 1]{Bar14}.  
It turns out that the minimizers of \eqref{functional} are the same as the minimizers of \eqref{key problem} under the  volume 
constraint $\gamma(E) = \phi(s)$, as proved in the last section. The advantage of a volume 
penalization is that it helps us to bound the Lagrange multiplier in a simple way. The  constants $\sqrt{\pi/2}$ and 
$\sqrt{2\pi}$ in front of the last two terms are chosen to simplify the formulas of the Euler equation and the second variation. 

As we explained in the introduction, the idea is to restrict  the parameter $\e$ in \eqref{functional} within a range, which 
contains the threshold  value \eqref{choice epsilon down} and such that the only local minimizers of \eqref{functional}, which 
satisfy certain perimeter bounds, are the half-space and the symmetric strip. To this aim we assume from now on that $\e$ is in the range
\begin{equation}\label{choice epsilon up}
\frac65 \, \frac{\sqrt{2 \pi}}{s^2}  e^{\frac{s^2}{2}} \leq  \e \leq  \frac75 \,  \frac{\sqrt{2 \pi}}{s^2}  e^{\frac{s^2}{2}}.
\end{equation}
Note that the threshold value \eqref{choice epsilon down} is within  this interval.   
 If we are able to show that when $\e$ satisfies \eqref{choice epsilon up} the only local minimizers of  \eqref{functional}  
are  $H_{\unit,s}$ and $D_{\unit,s}$, we obtain  the main result. Indeed,  when $\e$ takes the lower value in \eqref{choice epsilon up} 
it holds $\mathcal{F}(H_{\unit,s})< \mathcal{F}(D_{\unit,s})$ and   the minimizer is $H_{\unit,s}$. It is then not difficult to see that  
for every value $\e$ less than this, the minimizer is still $H_{\unit,s}$. Similarly,  when $\e$ takes the larger value in \eqref{choice epsilon up} 
it holds $\mathcal{F}(D_{\unit,s})< \mathcal{F}(H_{\unit,s})$ and  the minimizer is $D_{\unit,s}$. 
Hence, for every value $\e$ larger than this,  $D_{\unit,s}$ is still the minimizer of \eqref{functional}, since it has  barycenter zero. 

Next we deduce a priori perimeter  bounds for the minimizer. First, we may bound the perimeter from above by the minimality as
\begin{equation}\label{bound perimeter up}
P_\gamma(E)\leq\F(E)\leq\F(D_{\unit,s})=P_\gamma(D_{\unit,s})\leq \Bigl( 1 + \frac{1}{s^2}\Bigr) e^{-\frac{s^2}{2}}.
\end{equation} 
To bound the perimeter from below is slightly more difficult. Let $E$ be a  minimizer of \eqref{functional} 
with volume $\gamma(E)  = \phi(\bar{s})$. First, it is clear that $\bar{s} \geq 0$. Let us show that $\bar{s} \leq s + \tfrac{1}{s}$, 
which by the   Gaussian  isoperimetric inequality implies the following perimeter lower bound 
\begin{equation}\label{bound perimeter low}
P_\gamma(E) \geq \frac14 e^{-\frac{s^2}{2}}.
\end{equation} 
We argue by  contradiction and assume $\bar{s} > s + \tfrac{1}{s}$. By the Gaussian  isoperimetric inequality we deduce
\[
\mathcal{F} (E) \geq P_\gamma(E) + \Lambda\sqrt{2\pi}\, (\phi(s) - \phi(\bar{s})) 
\geq e^{-\frac{\bar{s}^2}{2}} + (s + 1) \int_s^{\bar{s}} e^{-\frac{t^2}{2}} \, dt. 
\]
Define the function $f:[s, \infty) \to \R$, $f(t) := e^{-\frac{t^2}{2}} + (s + 1) \int_s^{t} e^{-\frac{l^2}{2}} \, dl. $ 
By differentiating we get 
\[
f'(t) = (-t + s + 1) e^{-\frac{t^2}{2}}. 
\]
The function is clearly increasing up to $t = s + 1$ and then decreases to the value 
$\lim_{t \to \infty}f(t) = (s + 1) \int_s^{\infty} e^{-\frac{t^2}{2}} \, dt  \geq  \left( 1 + \frac{1}{2s} \right) e^{-\frac{s^2}{2}}$ 
by \eqref{estimate1}.  We also deduce that $f'(t) \geq \frac{1}{4}e^{-\frac{s^2}{2}}$ for 
$s \leq t \leq s + 1/s$ and therefore    $f(s + 1/s) \geq \left( 1 + \frac{1}{4s} \right) e^{-\frac{s^2}{2}}$. 
Hence,  if  $\bar{s} \geq s + \tfrac{1}{s}$ we have that 
\[
\mathcal{F} (E) \geq f(\bar{s}) \geq  \min \{f(s + 1/s), \lim_{t \to \infty}f(t)  \} \geq \left( 1 + \frac{1}{4s} \right) e^{-\frac{s^2}{2}}. 
\]
But this contradicts $\mathcal{F} (E) \leq \mathcal{F} (D_{\unit,s}) = P(D_{\unit,s})$ by  \eqref{estimate PD}. 
Thus we  have  \eqref{bound perimeter low}. 

\medskip
For reader's convenience we summarize the results  concerning the regularity of minimizers and the first and the second variation 
of \eqref{functional} contained in \cite[Section 4]{Bar14} in the following theorem.

\begin{theorem}\label{thm reg}
Let  $E$ be a minimizer of  \eqref{functional}. Then the reduced boundary  $\partial^*E$ is a relatively open, 
smooth hypersurface and satisfies the Euler equation
\begin{equation}\label{euler}
\Hh -\langle x, \nu \rangle + \e \langle b, x \rangle = \lambda \qquad \text{on }\, \partial^*E.
\end{equation}
The Lagrange multiplier $\lambda$ can be estimated by $|\lambda| \leq \Lambda$.
The singular part of the boundary $\partial E \setminus \partial^*E$ is empty when $n <8$, 
while for $n \geq 8$ its Hausdorff dimension can be estimated by $\dim_{\Ha}(\partial E \setminus \partial^*E) \leq n-8$.  
Moreover, the quadratic form associated with the second variation is non-negative
\begin{equation}\label{second var}\begin{split}
\F[\varphi] :=& \int_{\partial^* E} \left(|\nabla_\tau \varphi|^2 - |B_E|^2\varphi^2 
+ \e \langle b , \nu \rangle \varphi^2- \varphi^2\right)\, d\Ha^{n-1}_\gamma \\
& + \frac{\e}{\sqrt{2\pi}} \, \Big| \int_{\partial^* E} \varphi\, x \, d\Ha^{n-1}_\gamma \Big|^2 \geq 0
\end{split}\end{equation}
for every $\varphi \in C_0^\infty(\partial^* E)$ which satisfies 
$\int_{\partial^* E} \varphi \, d\Ha^{n-1}_\gamma = 0$.
\end{theorem}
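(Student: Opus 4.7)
The plan is to verify that the arguments of \cite[Section 4]{Bar14} carry over to the penalized functional \eqref{functional}. The statement splits into three independent claims that I would treat separately: interior regularity with the singular set estimate, the Euler equation with the bound on the Lagrange multiplier, and the second variation formula with its non-negativity.

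\emph{Regularity.} The Gaussian density $e^{-|x|^2/2}/(2\pi)^{(n-1)/2}$ is smooth and bounded away from zero on every ball $B_R$, so on $B_R$ the measure $\Ha^{n-1}_\gamma$ is comparable to the classical surface measure. Both the barycenter term $|b(E)|^2$ and the volume penalty are Lipschitz with respect to $\gamma(E\triangle F)$ for competitors supported in $B_R$; for instance $\big||b(E)|^2-|b(F)|^2\big|\leq C_R\,\gamma(E\triangle F)$. Consequently every minimizer of $\mathcal{F}$ is a $(\omega,r_0)$-almost minimizer of the classical perimeter in the sense of Tamanini, and the standard De~Giorgi--Federer--Tamanini theory yields $C^{1,\alpha}$ regularity of $\pa^*E$ together with $\dim_{\Ha}(\pa E\setminus\pa^*E)\leq n-8$; smoothness up to $C^\infty$ follows a posteriori from the Euler equation and a bootstrap.

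\emph{Euler equation.} For $X\in C_c^1(\R^n;\R^n)$ with flow $\Phi_t$ and normal velocity $\zeta:=\la X,\nu\ra$, the tangential divergence theorem gives $\frac{d}{dt}|_{t=0}P_\gamma(\Phi_tE)=\int_{\pa^*E}(\Hh-\la x,\nu\ra)\zeta\, d\Ha^{n-1}_\gamma$, and the Gauss--Green formula gives $\frac{d}{dt}|_{t=0}\gamma(\Phi_tE)=\frac{1}{\sqrt{2\pi}}\int_{\pa^*E}\zeta\, d\Ha^{n-1}_\gamma$ and $\frac{d}{dt}|_{t=0}b(\Phi_tE)=\frac{1}{\sqrt{2\pi}}\int_{\pa^*E} x\zeta\, d\Ha^{n-1}_\gamma$. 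The constants $\sqrt{\pi/2}$ and $\sqrt{2\pi}$ in \eqref{functional} are tuned so that these three variations combine into the clean expression \eqref{euler} upon using minimality. When $\gamma(E)\neq\phi(s)$ the ``multiplier'' $\lambda$ equals $\mp\Lambda$, coming from differentiating the absolute value in the penalty; when $\gamma(E)=\phi(s)$, $\lambda$ is a genuine Lagrange multiplier and the bound $|\lambda|\leq\Lambda$ follows because otherwise a one-sided volume-changing variation would beat the penalty $\Lambda\sqrt{2\pi}|\gamma-\phi(s)|$ and strictly decrease $\mathcal{F}$.

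\emph{Second variation.} Take $\varphi\in C_0^\infty(\pa^*E)$ with $\int_{\pa^*E}\varphi\, d\Ha^{n-1}_\gamma=0$, extend it to a neighborhood of $\pa^*E$ by the signed distance, and consider a normal flow $\Phi_t(x)=x+t\varphi\nu+\tfrac{t^2}{2}\eta\nu$, with $\eta$ chosen so that $\gamma(\Phi_tE)=\gamma(E)$ up to order two. The classical Jacobi-type formula for the Gaussian perimeter reads
\[
\tfrac{d^2}{dt^2}\big|_{t=0}P_\gamma(\Phi_tE) = \int_{\pa^*E}\bigl(|\nabla_\t\varphi|^2-|B_E|^2\varphi^2-\varphi^2\bigr)\, d\Ha^{n-1}_\gamma + \int_{\pa^*E}(\Hh-\la x,\nu\ra)\eta\, d\Ha^{n-1}_\gamma,
\]
while differentiating $\e\sqrt{\pi/2}|b|^2$ twice produces both the pointwise contribution $\e\la b,\nu\ra\varphi^2$ and the nonlocal square $\tfrac{\e}{\sqrt{2\pi}}\big|\int_{\pa^*E}\varphi\, x\, d\Ha^{n-1}_\gamma\big|^2$, the latter arising from the convexity of $t\mapsto|b(\Phi_tE)|^2$. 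The $\eta$-correction in the Jacobi formula cancels against the matching $\eta$-contributions from the barycenter variation and from the volume penalty, via the Euler equation \eqref{euler}, leaving exactly \eqref{second var}; its non-negativity is then just the minimality of $E$ along the chosen volume-preserving family. The delicate point, and the main obstacle, is this bookkeeping: tracking how $\eta$ couples with the nonlocal barycenter term and the multiplier, and checking that every $\sqrt{2\pi}$-factor combines as advertised so that \eqref{second var} emerges in the stated clean form.
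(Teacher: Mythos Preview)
The paper does not supply its own proof of this theorem; the sentence introducing it says that it ``summarize[s] the results concerning the regularity of minimizers and the first and the second variation of \eqref{functional} contained in \cite[Section 4]{Bar14}.'' Your plan---to check that the arguments of \cite{Bar14} transfer verbatim to the present penalized functional---is therefore exactly the paper's own strategy, and your three-part outline (almost-minimality $\Rightarrow$ regularity and singular set estimate; first variation $\Rightarrow$ Euler equation and the comparison argument for $|\lambda|\leq\Lambda$; volume-preserving second variation with the $\eta$-correction absorbed via \eqref{euler}) is the standard route and is correct in structure. One small wording issue: the nonlocal square in \eqref{second var} is not a consequence of convexity of $t\mapsto|b(\Phi_tE)|^2$ but simply the $2|b'(0)|^2$ term in $\tfrac{d^2}{dt^2}|b|^2=2|b'|^2+2\la b,b''\ra$; the pointwise contribution $\e\la b,\nu\ra\varphi^2$ comes from the $\la b,b''\ra$ part. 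Otherwise the bookkeeping you flag is real but routine, and the paper is content to leave it entirely to the reference.
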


The Euler equation \eqref{euler} yields important geometric equations for the position vector $x$ and for the Gauss map $\nu$.  
For arbitrary  $\omega \in  \mathbb{S}^{n-1}$ we write 
\[
x_\omega = \la x, \omega\ra \qquad \text{and} \qquad  \nu_\omega = \la \nu, \omega\ra.
\] 
If $\{e^{(1)},\ldots,e^{(n)}\}$ is a canonical basis of $\R^n$ we simply write    
\[
x_i = \la x, e_i \ra \qquad \text{and} \qquad  \nu_i = \la \nu, e_i \ra.
\]
From  \eqref{euler} and from the fact $\Delta_\tau x_\omega= - \Hh  \nu_\omega$ \cite[Proposition 1]{L} we have 
\begin{equation}\label{eq x 2}
\Delta_\tau x_\omega - \la \nabla_\tau x_\omega, x\ra =-x_\omega -\lambda \nu_\omega  + \e \la b,x\ra \nu_\omega.
\end{equation}
Moreover, from \eqref{euler} and from the fact  $\Delta_\tau \nu_\omega= - |B_E|^2 \nu_\omega + \la \nabla_\tau \Hh, \omega\ra$ 
 \cite[Lemma  10.7]{Giusti} we get
\begin{equation}\label{eq nu}
\Delta_\tau \nu_\omega - \la \nabla_\tau \nu_\omega, x\ra = -|B_E|^2\nu_\omega  + \e  \la b, \nu\ra \nu_\omega  -\e \la b, \omega\ra. 
\end{equation}

\medskip
By the divergence theorem on $\pa^* E$ we have that for any function $\varphi \in C_0^\infty(\pa^* E)$ and for any  
function $\psi \in C^1(\pa^* E)$, 
\[
\int_{\pa^* E}\div_\tau\Bigl( e^{-\frac{|x|^2}{2}}\psi\nabla_\tau \varphi \Bigr) \, d \Ha^{n-1} = 
\int_{\pa^* E} \Hh \la e^{-\frac{|x|^2}{2}}\psi\nabla_\tau \varphi,\nu^E \ra \, d \Ha^{n-1}=0.
\]
The previous equality gives us an integration by parts formula  
\[
\int_{\pa^* E} \psi (\Delta_\tau \varphi  - \la \nabla_\tau \varphi, x\ra)  \, d \Ha_\gamma^{n-1} 
=  - \int_{\pa^* E} \la \nabla_\tau \psi , \nabla_\tau \varphi \ra   \, d \Ha_\gamma^{n-1}.
\]
We will use along the paper the above formula with $\varphi = x_\omega$ or $\varphi = \nu_\omega$. Also if they do not belong to 
$C_0^\infty(\pa^* E)$, we are allowed to do so by an approximation argument (see \cite{Bar14,Ro}).

\begin{remark}
\label{eigenfunction}
We associate the following second order operator $L$  with  the first four terms in the quadratic form \eqref{second var}, 
\begin{equation}\label{operator L}
L[\varphi] := - \Delta_\tau \varphi  + \la \nabla_\tau \varphi , x\ra  -|B_E|^2 \varphi  + \e  \la b, \nu\ra \varphi -\varphi,
\end{equation}  
where $\varphi \in C_0^\infty(\partial^* E)$. By integration by parts the inequality \eqref{second var} can be written as 
\[
 \int_{\partial^* E} L[\varphi] \varphi\, d\Ha^{n-1}_\gamma 
 + \frac{\e}{\sqrt{2\pi}} \, \Big| \int_{\partial^* E} \varphi\, x \, d\Ha^{n-1}_\gamma \Big|^2 \geq 0.
\]

Note that when the vector $\omega$ is orthogonal to the barycenter, i.e.,  $\la \omega, b \ra = 0$, then by \eqref{eq nu} 
the function $\nu_\omega$ is an eigenfunction of $L$ and satisfies 
\[
L[\nu_\omega] = - \nu_\omega.
\] 
\end{remark}

For every $\unit \in \Sf^{n-1}$ it holds by the divergence theorem in $\R^n$ that  
\begin{equation*}\begin{split}
\int_{\partial^* E} \nu_\unit\, d\Ha^{n-1}_\gamma(x)
&=\frac{1}{(2\pi)^{\frac{n-1}{2}}}\int_E \div( \unit e^{-\frac{|x|^2}{2}}) \, dx\\
=&-\sqrt{2\pi}\int_E \langle x,\unit\rangle \, d\gamma(x)
=-\sqrt{2\pi}\la b, \unit \ra.
\end{split}\end{equation*}
In particular, when $\la \omega, b \ra = 0$ the function  $\varphi = \nu_\omega$ has zero average. 
Therefore by Remark \ref{eigenfunction} it is natural to use $\nu_\omega$ with $\la \omega, b \ra = 0$ as a test function in the 
second variation condition \eqref{second var}. 

The  equality $\int_{\partial^* E} \nu_\unit\, d\Ha^{n-1}_\gamma = -\sqrt{2\pi}\la b, \unit \ra$ for every  $\unit \in \Sf^{n-1}$ also implies 
\begin{equation}
\label{usein 2}
\bar{\nu} P_\gamma(E) =  - \sqrt{2 \pi} \, b.
\end{equation}
In particular, we have by  \eqref{choice epsilon up}-\eqref{bound perimeter low}
\begin{equation}\label{usein}
\frac{1}{4 s^2} |\bar{\nu}|\leq \e |b|  \leq \frac{2}{s^2} |\bar{\nu}|.
\end{equation}

We conclude this preliminary section by  providing further ``regularity'' properties from \eqref{eq x 2} for the minimizers of \eqref{functional}. 
We call the estimates in the  following lemma ``Caccioppoli inequalities'' since they 
follow from \eqref{eq x 2} by an argument which is similar to the classical proof of Caccioppoli inequality known  in elliptic PDEs. 
This result is an improved version of  \cite[Proposition 1]{Bar14}.

\begin{lemma}[\textbf{Caccioppoli inequalities}]
Let $E \subset \R^n$ be a minimizer of \eqref{functional}. 
Then for any $\omega \in  \mathbb{S}^{n-1}$ it holds
\begin{equation}\label{standard1}
\int_{\pa^* E} x_\omega^2 \, d \Ha_\gamma^{n-1} \leq (s+2)^2 \int_{\pa^* E} \nu_\omega^2 \, d \Ha_\gamma^{n-1} + 8 P_\gamma(E)
\end{equation}
and
\begin{equation}\label{standard2}
\int_{\pa^* E} (x_\omega - \bar{x}_ \omega)^2 \, d \Ha_\gamma^{n-1} \leq 
(s+2)^2 \int_{\pa^* E} (\nu_\omega - \bar{\nu}_ \omega)^2 \, d \Ha_\gamma^{n-1} +  8 P_\gamma(E).
\end{equation}

\end{lemma}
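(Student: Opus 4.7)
The plan is to multiply the geometric equation \eqref{eq x 2} by $\psi = x_\omega$ (respectively $\psi = x_\omega - \bar x_\omega$ for \eqref{standard2}) and apply the weighted integration by parts formula stated just before Remark \ref{eigenfunction}. Using $|\nabla_\tau x_\omega|^2 = |\omega - \nu_\omega \nu|^2 = 1 - \nu_\omega^2$, this produces the identity
\[
\int_{\pa^* E} x_\omega^2 \, d\Ha^{n-1}_\gamma + \int_{\pa^* E} \nu_\omega^2 \, d\Ha^{n-1}_\gamma = P_\gamma(E) + \lambda \int_{\pa^* E} x_\omega \nu_\omega \, d\Ha^{n-1}_\gamma - \e \int_{\pa^* E} x_\omega \la b, x\ra \nu_\omega \, d\Ha^{n-1}_\gamma,
\]
and the task is to absorb the last two terms into the right hand side of \eqref{standard1}.

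For the Lagrange multiplier term I would use $|\lambda| \leq s+1$ (Theorem \ref{thm reg}) together with Young's inequality
\[
\Big| \lambda \int x_\omega \nu_\omega \, d\Ha^{n-1}_\gamma \Big| \leq \tfrac{1}{2}\int x_\omega^2 \, d\Ha^{n-1}_\gamma + \tfrac{(s+1)^2}{2}\int \nu_\omega^2 \, d\Ha^{n-1}_\gamma;
\]
the first piece is absorbed on the left and the second fits comfortably under $(s+2)^2\int \nu_\omega^2$. For the barycenter term I would write $\la b,x\ra = |b|\, x_{b/|b|}$, use $|x_\omega x_{b/|b|}| \leq (x_\omega^2 + x_{b/|b|}^2)/2$ together with $|\nu_\omega|\leq 1$, and crucially exploit \eqref{usein} in the form $\e|b| \leq 2/s^2$. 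This bounds the $\e$ term by $\frac{1}{s^2}\bigl(\int x_\omega^2 + \int x_{b/|b|}^2\bigr)$, a small perturbation whose only new feature is the appearance of the unknown $\int x_{b/|b|}^2$.

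The resulting inequality is self-referential: it controls $\int x_\omega^2$ in terms of itself and of $\int x_{b/|b|}^2$. To close the loop I would first apply the inequality with the special choice $\omega = b/|b|$, so that the two unknowns coincide and can be solved for using only the trivial bound $\int \nu_{b/|b|}^2 \leq P_\gamma(E)$. This yields a dimension-independent a priori bound $\int x_{b/|b|}^2 \leq C\, P_\gamma(E)$, which plugged back into the inequality for general $\omega$ yields \eqref{standard1}, provided $s$ is large enough. The argument for \eqref{standard2} is identical with $\psi = x_\omega - \bar x_\omega$: the zero average of $\psi$ and the identity $\nabla_\tau \psi = \nabla_\tau x_\omega$ automatically recentre all boundary quantities, turning $\nu_\omega$ into $\nu_\omega - \bar \nu_\omega$. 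The main obstacle will be the careful bookkeeping of constants in the self-referential step: the gap $(s+2)^2 - (s+1)^2 = 2s+3$ is exactly what is available to pay for the $1/s^2$ losses from the barycenter term and for the auxiliary estimate on $\int x_{b/|b|}^2$, while simultaneously keeping the absolute constant on $P_\gamma(E)$ under the value $8$.
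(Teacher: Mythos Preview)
Your proposal is correct and follows essentially the same route as the paper: multiply \eqref{eq x 2} by $x_\omega$ (respectively $x_\omega-\bar x_\omega$), integrate by parts, bound the $\lambda$-term by Young's inequality with $|\lambda|\leq s+1$ and the barycenter term via $\e|b|\leq 2/s^2$, then close the self-referential estimate by first specialising to $\omega=b/|b|$ to control $\int x_{b/|b|}^2$ and feeding this back into the general case. (Minor remark: the signs of the $\lambda$- and $\e$-terms in your displayed identity are flipped, but this is immaterial since both are estimated in absolute value.)
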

\begin{proof}
Let us first  prove \eqref{standard1}. To simplify  the notation we define   
\begin{equation*}
x_b := \begin{cases} \la x, \frac{b}{|b|} \ra   &\text{if } \, b \neq 0, \\
 0    &\text{if } \, b = 0. 
 \end{cases}
\end{equation*}
We multiply  \eqref{eq x 2} by $x_\omega$ and integrate by parts over $\pa^* E$ to get
\begin{equation}
\label{conv lem 1}
\int_{\pa^* E} x_\omega^2 \, d \Ha_\gamma^{n-1} 
=-\lambda \int_{\pa^* E} \nu_\omega x_\omega \, d \Ha_\gamma^{n-1} 
+\int_{\pa^* E} | \nabla_\tau x_\omega |^2 \, d \Ha_\gamma^{n-1} + \e |b| \int_{\pa^* E} x_b \nu_\omega  x_\omega \, d \Ha_\gamma^{n-1}
\end{equation}
We estimate the right-hand-side of \eqref{conv lem 1} in the following way.  We estimate the first term by Young's inequality 
\[
\begin{split}
-\lambda \int_{\pa^* E} \nu_\omega x_\omega \, d \Ha_\gamma^{n-1} &\leq \frac{1}{2} \int_{\pa^* E} x_\omega^2  \, d \Ha_\gamma^{n-1} 
+  \frac{\lambda^2}{2} \int_{\pa^* E} \nu_\omega^2   \, d \Ha_\gamma^{n-1} \\
&\leq \frac{1}{2} \int_{\pa^* E} x_\omega^2  \, d \Ha_\gamma^{n-1} +  \frac{(s+1)^2}{2} \int_{\pa^* E} \nu_\omega^2   \, d \Ha_\gamma^{n-1},
\end{split}
\]  
where the last inequality follows from the bound on the Lagrange multiplier 
\begin{equation}\label{bound lagrange}
|\lambda| \leq s+1
\end{equation}
given by Theorem \ref{thm reg} and by our choice of $\Lambda$ in \eqref{choice lambda}. 
Since $|\nabla_\tau x_\omega |^2 =  1-\nu_\omega^2 \leq 1$, we may bound the second term simply by 
\[
\int_{\pa^* E} | \nabla_\tau x_\omega |^2 \, d \Ha_\gamma^{n-1} \leq P_\gamma(E).
\]
Finally we bound the last term again by Young's inequality and by  $\e |b| \leq \frac{2}{s^2}$ (proved in \eqref{usein})
\[
 \e |b| \int_{\pa^* E} x_b \nu_\omega  x_\omega \, d \Ha_\gamma^{n-1} 
 \leq \frac{1}{s^2} \int_{\pa^* E} x_\omega^2 \, d \Ha_\gamma^{n-1} 
 + \frac{1}{s^2} \int_{\pa^* E} x_b^2 \, d \Ha_\gamma^{n-1}. 
\]
By using these three estimates in \eqref{conv lem 1} we obtain  
\begin{equation}
\label{conv lem 1 1}
\left(\frac{1}{2}- \frac{1}{s^2} \right)\int_{\pa^* E} x_\omega^2 \, d \Ha_\gamma^{n-1}  
\leq \frac{(s+1)^2}{2} \int_{\pa^* E} \nu_\omega^2 \, d \Ha_\gamma^{n-1} 
+ P_\gamma(E)  + \frac{1}{s^2} \int_{\pa^* E} x_b^2 \, d \Ha_\gamma^{n-1}.
\end{equation}

If the barycenter is zero the claim follows immediately from \eqref{conv lem 1 1}. 
If $b \neq 0$, we first use \eqref{conv lem 1 1} with $\omega = \frac{b}{|b|}$ and obtain
\[
\begin{split}
\left(\frac{1}{2}- \frac{2}{s^2} \right)\int_{\pa^* E} x_b^2 \, d \Ha_\gamma^{n-1}  
&\leq \frac{(s+1)^2}{2} \int_{\pa^* E} \nu_b^2 \, d \Ha_\gamma^{n-1} + P_\gamma(E) \\
&\leq \left(\frac{(s+1)^2}{2} +1 \right) P_\gamma(E).
\end{split}
\]
This implies
\begin{equation}
\label{conv lem 1 3}
\int_{\pa^* E} x_b^2 \, d \Ha_\gamma^{n-1}  \leq 2s^2 P_\gamma(E).
\end{equation}
Therefore we have by \eqref{conv lem 1 1} 
\[
\left(\frac{1}{2}- \frac{1}{s^2} \right)\int_{\pa^* E} x_\omega^2 \, d \Ha_\gamma^{n-1}  
\leq \frac{(s+1)^2}{2} \int_{\pa^* E} \nu_\omega^2 \, d \Ha_\gamma^{n-1} + 3 P_\gamma(E) .
\]
This yields the claim.

\medskip
The proof of  the second inequality is similar. We multiply the equation \eqref{eq x 2} by $(x_\omega - \bar{x}_ \omega)$ 
and integrate by parts over $\pa^* E$ to get
\[
\begin{split}
\int_{\pa^* E} (x_\omega - \bar{x}_ \omega)^2 \, d \Ha_\gamma^{n-1} 
=&-\lambda \int_{\pa^* E} (x_\omega - \bar{x}_ \omega)(\nu_\omega - \bar{\nu}_ \omega) \, d \Ha_\gamma^{n-1} 
+\int_{\pa^* E} | \nabla_\tau x_\omega |^2 \, d \Ha_\gamma^{n-1} \\
&+\e |b| \int_{\pa^* E} x_b \nu_\omega  (x_\omega - \bar{x}_ \omega) \, d \Ha_\gamma^{n-1}.
\end{split}
\]
By estimating the three terms on the right-hand-side precisely as before, we deduce
\[
\begin{split}
\left(\frac{1}{2}- \frac{1}{s^2} \right)\int_{\pa^* E} (x_\omega - \bar{x}_ \omega)^2 \, d \Ha_\gamma^{n-1}  
&\leq \frac{(s+1)^2}{2} \int_{\pa^* E} (\nu_\omega - \bar{\nu}_ \omega) ^2 \, d \Ha_\gamma^{n-1} + P_\gamma(E)  
+ \frac{1}{s^2} \int_{\pa^* E} x_b^2 \, d \Ha_\gamma^{n-1}\\
&\leq \frac{(s+1)^2}{2} \int_{\pa^* E} (\nu_\omega - \bar{\nu}_ \omega) ^2 \, d \Ha_\gamma^{n-1} + 3 P_\gamma(E),
\end{split}
\]
where the last inequality follows from \eqref{conv lem 1 3}. This implies \eqref{standard2}. 
\end{proof}

%%%%%%%%%%%%%%%%%%%%%%%%%%%%%%%%%%%%%%%%%%%%%%%%%%%%%%%%%%%%%%%%%%%%%%%%%%%%%%%%%%%%%%%%%%%%%%%%%%%%%%%%%%%%%%%%%%%%%%
\section{Reduction to the two dimensional case}\label{section redux n to 2}

\noindent
In this section we prove that it is enough to obtain the result in the two dimensional case.
More precisely, we  prove the following result.

\begin{theorem}\label{n to 2}
Let $E$ be a minimizer of \eqref{functional}. Then, up to a rotation, $E=F\times\R^{n-2}$ for some set $F\subset\R^2$. 
\end{theorem}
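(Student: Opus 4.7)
The plan is to apply the second-variation inequality \eqref{second var} with the test functions from Remark \ref{eigenfunction}. For every $\omega\in\Sf^{n-1}$ orthogonal to the barycenter $b$, the identity $\int_{\pa^* E}\nu_\omega\,d\Ha^{n-1}_\gamma=-\sqrt{2\pi}\la b,\omega\ra$ shows that $\nu_\omega$ has zero mean, and by Remark \ref{eigenfunction} it satisfies $L[\nu_\omega]=-\nu_\omega$. Plugging it into \eqref{second var} and integrating by parts yields
\begin{equation*}
\int_{\pa^* E}\nu_\omega^2\,d\Ha^{n-1}_\gamma \leq \frac{\e}{\sqrt{2\pi}}\Bigl|\int_{\pa^* E}\nu_\omega\,x\,d\Ha^{n-1}_\gamma\Bigr|^2 \qquad \text{for every } \omega\perp b.
\end{equation*}
The divergence theorem in $\R^n$ shows that the matrix $M_{ij}:=\int_{\pa^*E}\nu_ix_j\,d\Ha^{n-1}_\gamma$ equals $\sqrt{2\pi}\bigl(\gamma(E)\delta_{ij}-\int_E x_ix_j\,d\gamma\bigr)$, which is symmetric. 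Hence the stability inequality rewrites as $\omega^T N\omega \leq \tfrac{\e}{\sqrt{2\pi}}\,\omega^T M^2\omega$ on $b^\perp$, where $N_{ij}:=\int_{\pa^* E}\nu_i\nu_j\,d\Ha^{n-1}_\gamma$.

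The desired splitting $E=F\times\R^{n-2}$ is equivalent to saying that, after a rotation, $\nu$ takes values in a fixed two-dimensional subspace, i.e., $\mathrm{rank}(N)\leq 2$. By the classical invariance criterion for sets of finite perimeter, if $\nu_\omega\equiv 0$ on $\pa^*E$ for every $\omega$ in an $(n-2)$-dimensional subspace $W$, then $E$ is invariant under translations in $W$. Since the inequality above forces $\ker(M)\cap b^\perp\subseteq\ker(N)\cap b^\perp$, it suffices to prove that $M$ restricted to $b^\perp$ has rank at most one. I would attack this rank reduction by bounding $|M\omega|^2$ via Cauchy--Schwarz together with the sharper Caccioppoli estimate \eqref{standard2}: using that $\nu_\omega$ has zero mean for $\omega\perp b$, for any unit $v$
\begin{equation*}
\Bigl(\int \nu_\omega\,x_v\,d\Ha^{n-1}_\gamma\Bigr)^2 = \Bigl(\int \nu_\omega\,(x_v-\bar x_v)\,d\Ha^{n-1}_\gamma\Bigr)^2 \leq \int \nu_\omega^2\,d\Ha^{n-1}_\gamma\,\cdot\int(x_v-\bar x_v)^2\,d\Ha^{n-1}_\gamma,
\end{equation*}
and \eqref{standard2} controls the variance on the right by $(s+2)^2\int(\nu_v-\bar\nu_v)^2\,d\Ha^{n-1}_\gamma+8P_\gamma(E)$.

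Substituting this into the stability inequality produces a self-improvement of the form $\int\nu_\omega^2\,d\Ha^{n-1}_\gamma\leq C(s,\e)\int\nu_\omega^2\,d\Ha^{n-1}_\gamma$, with $C(s,\e)$ depending only on $s$, $\e$ and $P_\gamma(E)$. The principal obstacle is forcing $C(s,\e)<1$: a crude substitution of the range \eqref{choice epsilon up} and of the perimeter bound \eqref{bound perimeter up} produces a constant of order $\tfrac{7}{5}$, right above the threshold. Pushing it below one requires a careful choice of the optimal direction $v$ in the Cauchy--Schwarz step, namely $v$ taken nearly orthogonal to $b$ so that $\bar\nu_v\approx 0$ and \eqref{standard2} becomes sharp in that direction; this is precisely where the freedom to pick arbitrary, non-symmetric directions $\omega,v\in b^\perp$ enters --- matching the remark in the introduction that the reduction crucially does not require keeping sets symmetric. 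Once the refined estimate gives $\int\nu_\omega^2=0$ for every $\omega$ in an $(n-2)$-dimensional subspace $W$, the smoothness of $\pa^*E$ from Theorem \ref{thm reg} together with the translation-invariance criterion deliver $E=F\times\R^{n-2}$ after a suitable rotation.
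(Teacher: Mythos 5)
Your setup is the same as the paper's (test the stability inequality with $\nu_\omega$ for $\omega\perp b$, so that $L[\nu_\omega]=-\nu_\omega$ and the zero-mean condition holds), but the proof has a genuine gap exactly at the point you yourself flag as the ``principal obstacle.'' Your proposed fix --- choosing the direction $v$ in the Cauchy--Schwarz step nearly orthogonal to $b$ so that $\bar\nu_v\approx 0$ and \eqref{standard2} ``becomes sharp'' --- does not work: even with $\bar\nu_v=0$ the variance $\int(\nu_v-\bar\nu_v)^2\,d\Ha^{n-1}_\gamma$ can be of order $P_\gamma(E)$ (it is exactly $P_\gamma(E)$ when the normal is essentially $\pm v$, which is the strip-like situation you cannot exclude a priori), so \eqref{standard2} gives no improvement over \eqref{standard1} and the constant stays at about $\tfrac{7}{5}\cdot(1+o(1))>1$. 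This is precisely the trap the paper isolates in Remark \ref{why not}: for an arbitrary direction orthogonal to the barycenter the best such estimate yields the constant $\tfrac85$, and one cannot shrink $\e$ because of \eqref{choice epsilon down}. Moreover, your alternative route --- reducing the theorem to the claim that $M=\bigl(\int\nu_ix_j\,d\Ha^{n-1}_\gamma\bigr)$ restricted to $b^\perp$ has rank at most one --- is asserted but never proved, and no tool in your outline gives it; a priori it is essentially as strong as the theorem itself (it holds a posteriori for $H_{\unit,s}$ and $D_{\unit,s}$, but that is the conclusion, not an input). Note also that the stability inequality controls the full vector $\bigl|\int\nu_\omega\,x\,d\Ha^{n-1}_\gamma\bigr|^2$, i.e.\ all components $\sum_j\bigl(\int\nu_\omega x_j\bigr)^2$, so a Cauchy--Schwarz bound in a single well-chosen direction $v$ cannot by itself close the estimate.

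The missing idea, which is the actual engine of the paper's proof, is a choice of frame rather than a sharper variance bound: since $M$ is symmetric one may diagonalize it, i.e.\ arrange \eqref{null out diagonal}, order the basis so that $\int x_1^2\geq\int x_2^2\geq\dots$, and then (using $n\geq3$) pick $\omega\perp b$ with $\omega\in\mathrm{span}\{e^{(2)},e^{(3)}\}$. Diagonality kills every component of $\int\nu_\omega\,x\,d\Ha^{n-1}_\gamma$ except the second and third, and the ordering gives the decisive averaging gain
\[
\int_{\partial^* E}(x_2^2+x_3^2)\,d\Ha_\gamma^{n-1}\leq\frac23\int_{\partial^* E}(x_1^2+x_2^2+x_3^2)\,d\Ha_\gamma^{n-1}
\leq\frac23\bigl[(s+2)^2+24\bigr]P_\gamma(E),
\]
so that with \eqref{bound perimeter up} and \eqref{choice epsilon up} the constant in the self-improvement becomes $\tfrac{63}{65}<1$, forcing $\nu_\omega\equiv0$; the reduction then proceeds one direction at a time rather than producing the whole $(n-2)$-dimensional invariant subspace at once. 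Without this factor $\tfrac23$ (or some substitute mechanism you have not supplied), your inequality $\int\nu_\omega^2\leq C(s,\e)\int\nu_\omega^2$ has $C>1$ and yields nothing.
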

\begin{proof}
Let  $\{e^{(1)},\ldots,e^{(n)}\}$ be an orthonormal basis  of $\R^n$.  We begin with a simple observation: if $i \neq j$ then 
by the  divergence theorem 
\[
\int_{\partial^* E} x_i \nu_j \, d \Ha_\gamma^{n-1} = -\sqrt{2 \pi} \int_{E} x_ix_j \, d\gamma. 
\]
In particular, the matrix $A_{ij} = \int_{\partial E} x_i \nu_j \, d \Ha_\gamma^{n-1}$ is symmetric. We may therefore assume 
that $A_{ij}$ is diagonal, by changing the basis of $\R^n$ if necessary. In particular, it holds
\begin{equation}\label{null out diagonal}
\int_{\partial^* E} x_i \nu_j \, d \Ha_\gamma^{n-1} = 0 \qquad \text{for } \, i \neq j.
\end{equation}
By reordering the elements of the basis we may also assume that 
\begin{equation}\label{order}
\int_{\partial^* E}x_j^2 \,d \Ha_\gamma^{n-1}\geq\int_{\partial^* E}x_{j+1}^2 \,d\Ha_\gamma^{n-1}
\end{equation}
for $j\in\{1,\dots,n-1\}$.

Since we assume $n\geq3$, we may choose a direction $\unit\in\Sf^{n-1}$ which is  orthogonal both to the barycenter $b$ and to $e^{(1)}$. 
To be more precise, we choose $\unit$ such that $\la \omega,b\ra=0$ and $\unit \in \mathrm{span}\{e^{(2)},e^{(3)}\}$.  
Since $\la \omega,b\ra=0$, \eqref{usein 2} yields $\bar{\nu}_\unit = 0$. 
In other words, the  function $\nu_\unit$ has zero average. We use $\varphi=\nu_\unit$ as a test function in the second variation 
condition \eqref{second var}. According to Remark \ref{eigenfunction} we may write the inequality \eqref{second var} as 
\[
 \int_{\partial^* E} L[\nu_\unit] \nu_\unit\, d\Ha^{n-1}_\gamma 
 + \frac{\e}{\sqrt{2\pi}} \, \Big| \int_{\partial^* E} \nu_\unit\, x \, d\Ha^{n-1}_\gamma \Big|^2 \geq 0,
\]
where the operator $L$ is defined in \eqref{operator L}. Since $\unit$ is  orthogonal to $b$ we deduce by Remark \ref{eigenfunction} 
that $\nu_\unit$ is an eigenfunction of $L$ and satisfies $L[\nu_\unit] = - \nu _\unit$. Therefore we get 
\begin{equation}\label{ineq1}
-\int_{\pa^* E} \nu_\unit^2 \, d \Ha_\gamma^{n-1}
+\frac{\e}{\sqrt{2\pi}} \, \Big| \int_{\partial^* E} \nu_\unit\, x \, d\Ha_\gamma^{n-1} \Big|^2\geq0.
\end{equation}

The crucial step in the proof is  to estimate the second term in \eqref{ineq1}, by showing that it is small enough. 
This is possible, because  $\unit$ is orthogonal to $e^{(1)}$. Indeed, by using \eqref{null out diagonal} and the 
fact that $\unit\in\mathrm{span}\{e^{(2)},e^{(3)}\}$, and then Cauchy-Schwarz inequality, we get
\begin{equation*}\begin{split}
\Bigl| \int_{\partial^* E} \nu_\unit\,x\, d \Ha_\gamma^{n-1} \Bigr|^2 
=&\Bigl( \int_{\partial^* E} x_2 \nu_\unit \, d \Ha_\gamma^{n-1} \Bigr)^2 
+ \Bigl( \int_{\partial^* E} x_3 \nu_\unit \, d \Ha_\gamma^{n-1} \Bigr)^2\\
\leq& \Bigl(\int_{\partial^* E} x_2^2+x_3^2\, d \Ha_\gamma^{n-1} \Bigr)
\Bigl(\int_{\partial^* E} \nu_\unit^2\, d \Ha_\gamma^{n-1} \Bigr).
\end{split}\end{equation*}
We estimate the first term on the right-hand-side first  by  \eqref{order},  then by the Caccioppoli estimate \eqref{standard1} 
and finally by  \eqref{bound perimeter up}
\begin{equation}\label{key1}\begin{split}
\int_{\partial^* E}(x_2^2+x_3^2) \, \Ha_\gamma^{n-1}
&\leq \frac{2}{3}\int_{\partial^* E}(x_1^2+x_2^2+x_3^2) \, \Ha_\gamma^{n-1}\\
&\leq \frac{2}{3}\biggl[(s+2)^2\int_{\partial^* E}(\nu_1^2+\nu_2^2+\nu_3^2) \, \Ha_\gamma^{n-1}+24 P_\gamma(E)\biggr]\\
&\leq \frac{2}{3}\bigl[(s+2)^2+24\bigr] P_\gamma(E)
\leq \frac{9}{13}s^2e^{-\frac{s^2}{2}}.
\end{split}\end{equation}
Since we assume  $\e \leq\frac{7\sqrt{2 \pi}}{5s^2}  e^{\frac{s^2}{2}}$ (see \eqref{choice epsilon up}), the previous two inequalities yield
\begin{equation}\label{ineq2}
\frac{\e}{\sqrt{2\pi}} \, \Bigl| \int_{\partial^* E} \nu_\unit\,x\, d \Ha_\gamma^{n-1} \Bigr|^2 
\leq\frac{63}{65}\int_{\partial^* E} \nu_\unit^2\, d \Ha_\gamma^{n-1}.
\end{equation}
Then, by collecting \eqref{ineq1} and \eqref{ineq2} we obtain
\begin{equation}\label{finale}
-\int_{\partial^* E}\nu_\unit^2 \, d \Ha_\gamma^{n-1} \geq 0.
\end{equation}
This implies $\nu_\unit =  0$.
% Since $E$ has locally finite perimeter in $\R^n$, De Giorgi's structure theorem
%  \cite[Theorem 15.9]{Ma} yields
% \begin{equation*}
% D\chi_E=-\nu\Ha^{n-1}\lfloor\partial^*E.
% \end{equation*}
% Therefore, the distributional partial derivatives in the direction $\unit$
% is null and necessarily the function $\chi_E$ is constant in that direction.
% Lef $F$ be the intersection of $E$ with the hyperplane $H$ perpendicular to $\unit$.
% We have $b(E)=b(F)$ and also $\Ha^{n-1}_\gamma(\partial^*E)=\Ha^{n-2}_\gamma((\partial^*E)\cap H)$.
% Since the essential boundary of $F$ in $H$ coincides with $(\partial^*E)\cap H$ (see \cite{Volp}) 
We have thus reduced the problem from $n$ to $n-1$. By repeating the previous argument we reduce
the problem to the planar case. 
\end{proof}

\begin{remark} \label{why not}
We have to be careful in our  choice of direction $\unit$, and in general we may not simply choose any direction orthogonal to the barycenter $b$.
Indeed, if $\unit, v \in \Sf^{n-1}$ are  vectors such that $\la b, \unit \ra = 0$ and  
\begin{equation}\label{rewrite}
\Big| \int_{\partial^* E}  \nu_\unit \,x \, d\Ha^{n-1}_\gamma\Big|  
= \la \int_{\partial^* E} \nu_\unit\, x\, d\Ha^{n-1}_\gamma, \upsilon\ra
=  \int_{\partial^* E} \nu_\unit \la x, \upsilon \ra  \, d\Ha^{n-1}_\gamma.
\end{equation}
Then, by using  Cauchy-Schwarz inequality, we may estimate the second term in \eqref{ineq1} by
\begin{equation*}
\frac{\e}{\sqrt{2\pi}} \, \Big| \int_{\partial^* E} \nu_\unit\, x \, d\Ha^{n-1}_\gamma \Big|^2
\leq\frac{\e}{\sqrt{2\pi}} \left( \int_{\partial^* E} x_\upsilon^2 \, d\Ha^{n-1}_\gamma\right)
\left( \int_{\partial^* E} \nu_\unit^2 \, d\Ha^{n-1}_\gamma\right).
\end{equation*}
We may estimate the term $\frac{\e}{\sqrt{2\pi}} \int_{\partial^* E} x_\upsilon^2 \, d\Ha^{n-1}_\gamma$ by the Caccioppoli 
estimate  \eqref{standard1}, and by \eqref{bound perimeter up} and \eqref{choice epsilon up}  
\begin{equation*}
\frac{\e}{\sqrt{2\pi}} \, \Bigl| \int_{\partial^* E} \nu_\unit\,x\, d \Ha_\gamma^{n-1} \Bigr|^2 
\leq\frac{8}{5}\int_{\partial^* E} \nu_\unit^2\, d \Ha_\gamma^{n-1}
\end{equation*}
instead of \eqref{ineq2}. Unfortunately this estimate is not good enough.
Note that we cannot shrink $\e$, since we have the constrain given by \eqref{choice epsilon down}.
\end{remark}

\begin{remark}
\label{rem b=0}
We may  further reduce the problem to the one dimensional case
if $b=0$, since we may use $\unit=e^{(2)}$ in the previous argument ($\nu_\unit$ has zero average and 
$\int_{\partial^* E}x_2^2$ is small enough).
However, this is a special case and a priori nothing guaranties that $b=0$.
Because of that we have to handle the reduction to the one dimensional case in a different way.
\end{remark}

%%%%%%%%%%%%%%%%%%%%%%%%%%%%%%%%%%%%%%%%%%%%%%%%%%%%%%%%%%%%%%%%%%%%%%%%%%%%%%%%%%%%%%%%%%%%%%%%%%%%%%%%%%%
\section{Reduction to the one dimensional case}\label{section redux 2 to 1}

\noindent
In this section we will prove a further reduction of the problem, by showing that it is enough to obtain the result in the one 
dimensional case. This is technically more involved than  Theorem~\ref{n to 2} and requires more a priori  information on the minimizers.

\begin{theorem}\label{2 to 1}
Let $E$ be a minimizer of \eqref{functional}. Then, up to a rotation, $E=F\times\R^{n-1}$ for some set $F\subset\R$.
\end{theorem}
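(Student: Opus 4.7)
By Theorem \ref{n to 2} we may assume $n=2$, so $E\subset\R^2$. Up to rotation, the claim is that $\nu_2=\la\nu,e^{(2)}\ra\equiv 0$ on $\partial^*E$ for an appropriate basis $\{e^{(1)},e^{(2)}\}$: the divergence theorem applied to the vector field $e^{(2)}$ then shows $\mathbf{1}_E$ is translation invariant in the $e^{(2)}$ direction, i.e., $E=F\times\R$ for some $F\subset\R$. If $b(E)=0$, Remark \ref{rem b=0} directly applies: order the basis so that $\int x_2^2\,d\Ha_\gamma^1$ is the smaller of the two diagonal moments and repeat the computation \eqref{key1}--\eqref{finale} with $\unit=e^{(2)}$. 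So assume $b\neq 0$ and choose coordinates with $b=|b|\,e^{(1)}$.

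The natural test function is $\vphi=\nu_2$. Since $\la b,e^{(2)}\ra=0$, \eqref{usein 2} gives $\overline{\nu_2}=0$, and by Remark \ref{eigenfunction} one has $L[\nu_2]=-\nu_2$, so the stability inequality \eqref{second var} collapses to
\[
\int_{\partial^*E}\nu_2^2\,d\Ha_\gamma^1\leq\frac{\e}{\sqrt{2\pi}}\Big|\int_{\partial^*E}\nu_2\,x\,d\Ha_\gamma^1\Big|^2.
\]
I need to show this forces $\nu_2\equiv 0$. As emphasized in Remark \ref{why not}, the third-direction trick \eqref{key1} used in Theorem \ref{n to 2} is unavailable because in $\R^2$ the direction orthogonal to $b$ is unique, and the Cauchy--Schwarz bound through the Caccioppoli estimate \eqref{standard1} only yields a multiplicative constant of order $8/5$ on the right-hand side of the stability inequality, which is too weak.

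The sharper estimate should come from combining three ingredients. First, the divergence-theorem identities
\[
\int_{\partial^*E}x_2\nu_2\,d\Ha_\gamma^1=\sqrt{2\pi}\Big(\gamma(E)-\int_E x_2^2\,d\gamma\Big),\qquad \int_{\partial^*E}x_1\nu_2\,d\Ha_\gamma^1=-\sqrt{2\pi}\int_E x_1x_2\,d\gamma,
\]
recast both components of $\int\nu_2\,x\,d\Ha_\gamma^1$ as integrals over the small set $E^c$ (whose Gaussian measure is of order $s^{-1}e^{-s^2/2}$). Second, equation \eqref{eq nu} with $\unit=e^{(2)}$ becomes $\Delta_\tau\nu_2-\la\nabla_\tau\nu_2,x\ra=-\Hh^2\nu_2+\e|b|\nu_1\nu_2$, which by multiplication with $\nu_2$ and integration by parts yields a PDE-type identity relating $\int|\nabla_\tau\nu_2|^2$, $\int\Hh^2\nu_2^2$ and $\int\nu_1\nu_2^2$. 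Third, the Euler equation \eqref{euler}, together with refined Caccioppoli-type integrations by parts in which the multipliers are tailored to the planar geometry (using products of $\nu_1$, $x_1$ and $\Hh$), should produce a sharper bound on $|\int\nu_2\,x\,d\Ha_\gamma^1|^2$ in terms of $\int\nu_2^2$ — strictly better than the generic Cauchy--Schwarz output — and thereby close the stability inequality with strict negativity unless $\nu_2\equiv 0$.

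The main obstacle is precisely the quantitative tightness highlighted in Remark \ref{why not}: the admissible range \eqref{choice epsilon up} of $\e$ cannot be shrunk, because it is forced to contain the threshold \eqref{choice epsilon down}, so the PDE refinement has to save exactly the offending factor of order one rather than a vanishing quantity. This is why the argument is genuinely ``ad hoc'' and cannot proceed by the same soft reasoning as Theorem \ref{n to 2}; it must exploit the one-dimensional nature of $\partial^*E$ in $\R^2$, where $|B_E|^2=\Hh^2$ is a scalar and the Euler equation becomes an ODE along the curve, which is what allows the interplay between the first- and second-order information to be pushed to the sharp constant.
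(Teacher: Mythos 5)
Your proposal is a plan, not a proof: everything hinges on the claim that a ``sharper bound on $\big|\int_{\pa E}\nu_2\,x\,d\Ha_\gamma^1\big|^2$'' can be extracted from the ingredients you list, and that bound is never produced. Worse, the first ingredient you name cannot produce it. The identities $\int_{\pa E}x_2\nu_2\,d\Ha_\gamma^1=\sqrt{2\pi}\big(\gamma(E)-\int_E x_2^2\,d\gamma\big)$ and $\int_{\pa E}x_1\nu_2\,d\Ha_\gamma^1=-\sqrt{2\pi}\int_E x_1x_2\,d\gamma$ only give \emph{absolute} bounds of order $\gamma(E^c)^{1/2}\sim s^{-1/2}e^{-s^2/4}$, so after multiplying by $\e\sim s^{-2}e^{s^2/2}$ the stability inequality yields $\int_{\pa E}\nu_2^2\,d\Ha_\gamma^1\lesssim s^{-3}$ --- which is vacuous, since $\int_{\pa E}\nu_2^2\,d\Ha_\gamma^1\leq P_\gamma(E)\sim e^{-s^2/2}$ automatically. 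The whole difficulty is that both sides of \eqref{second var} live at the perimeter scale $e^{-s^2/2}$, and any estimate that loses this scale (as the divergence-theorem recasting over $E^c$ does) says nothing. Similarly, multiplying \eqref{eq nu} by $\nu_2$ and integrating (your second ingredient) is exactly the identity \eqref{by part1} already used in the paper, and by itself it does not improve on the factor $8/5$ of Remark \ref{why not}. So the heart of the argument is missing, and the route you sketch for filling it does not work at the relevant scale.

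For comparison, the paper does not sharpen the global bound on $\big|\int\nu_e\,x\big|^2$ at all; it changes the test functions. After a dichotomy on \eqref{nu 2 big} (if it fails, the moments $\int(x_i-\bar x_i)^2$ are small enough that the argument of Theorem \ref{n to 2} applies verbatim to a direction orthogonal to $b$), Lemma \ref{key lemma} produces a direction $v$ in which $\nu_v$ is nearly constant and the curvature is small, and Lemma \ref{extra info} shows that each half $\Sigma_\pm=\{\pm x_2>0\}$ is flat, $\int_{\Sigma_\pm}(x_i-(x_i)_{\Sigma_\pm})^2\,d\Ha_\gamma^1\leq CP_\gamma(E)$, and that the perimeter in the strip $\{|x_2|\leq s/3\}$ is negligible. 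One then tests \eqref{second var} with $(\nu_i-\alpha_i)\zeta(x_2)$, supported essentially on $\Sigma_+$ (and separately on $\Sigma_-$): for such functions the offending term obeys \eqref{core}, because one may subtract $(x_i)_{\Sigma_+}$ inside $\int \varphi\,x$ and the remaining moment is $O(P_\gamma(E))$ rather than $O(s^2P_\gamma(E))$. Summing over $i=1,2$ kills the curvature on each half, and a final integration of \eqref{eq nu 2} for $\nu_1$ combined with \eqref{usein 2} and \eqref{usein} forces $\bar\nu_1=0$, hence $\nu_1\equiv0$ by \eqref{key lemma2}. If you want to complete your write-up, it is this localization to $\Sigma_\pm$ (or some substitute that keeps all estimates at the scale $P_\gamma(E)$) that you must supply; the ``refined Caccioppoli multipliers'' you allude to are not identified, and no identity you state gets past the constant $8/5$ obstruction you yourself flag.
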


Thanks to Theorem \ref{n to 2} we may assume from now on that $n=2$. 
In particular, by Theorem \ref{thm reg} the boundary is regular and $\partial E=\partial^* E$.
Moreover the Euler equation and \eqref{eq nu} simply read as
\begin{equation}
\label{euler 2d}
k = \lambda + \la x, \nu\ra  -  \e \la b,x\ra,
\end{equation}
\begin{equation}
\label{eq nu 2}
\Delta_\tau \nu_\omega - \la \nabla_\tau \nu_\omega, x\ra = -k^2\nu_\omega  + \e  \la b, \nu\ra \nu_\omega  -\e \la b, \omega\ra,
\end{equation}
where $k$ is the curvature of $\partial E$. 

The idea is to proceed by using the second variation argument once more, but this time  in a direction
that it is not necessarily orthogonal to the barycenter. This argument does not reduce the problem to $\R$, but  
gives us the following information on the minimizers. 

\begin{lemma}\label{key lemma}
Let $E \subset \R^2$ be a minimizer of  \eqref{functional}. Then 
\begin{equation}\label{key lemma1}
\fint_{\pa E}  k^2 \, d \Ha_\gamma^1 \leq \frac{2}{s^2}.
\end{equation}
Moreover, there exists a direction $v \in \Sf^1$ such that
\begin{equation}\label{key lemma2}
\fint_{\pa E} (\nu_v - \bar{\nu}_v)^2 \, d \Ha_\gamma^1  
\leq \frac{10}{s^2} \, \bar{\nu}_v^2.
\end{equation}
\end{lemma}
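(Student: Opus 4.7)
The plan is to test the second variation inequality \eqref{second var} with $\varphi = \nu_\omega - \bar\nu_\omega$ (which has zero mean), exploiting the two-dimensional identity $|\nabla_\tau \nu_\omega|^2 = k^2(1-\nu_\omega^2)$ together with a careful choice of orthonormal basis. As in the proof of Theorem \ref{n to 2}, I would first diagonalize the symmetric matrix $A_{ij} = \int_{\pa E} x_i \nu_j\, d\Ha^1_\gamma$ and work in the corresponding basis $\{e^{(1)}, e^{(2)}\}$, so that off-diagonal terms $\int x_i \nu_j\, d\Ha^1_\gamma$ vanish for $i\neq j$. This diagonalization is essential for controlling the last term in \eqref{second var}, exactly for the reason explained in Remark \ref{why not}.

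For \eqref{key lemma1}, I plug $\varphi = \nu_\omega - \bar\nu_\omega$ into \eqref{second var}, expand $|\nabla_\tau \nu_\omega|^2 - k^2(\nu_\omega - \bar\nu_\omega)^2 = k^2[(1-\bar\nu_\omega^2) - 2\nu_\omega(\nu_\omega - \bar\nu_\omega)]$, and sum the resulting inequality over $\omega \in \{e^{(1)}, e^{(2)}\}$. Using $\sum_\omega \nu_\omega^2 = 1$, $\int \langle b,\nu\rangle\, d\Ha^1_\gamma = -\sqrt{2\pi}|b|^2$, identity \eqref{usein 2} and estimate \eqref{usein}, together with the Caccioppoli inequality \eqref{standard2} applied in the diagonal basis (so that the $\frac{\epsilon}{\sqrt{2\pi}}\sum_\omega |\int (\nu_\omega - \bar\nu_\omega)x\, d\Ha^1_\gamma|^2$ term is controlled by $\frac{C}{s^2}(1-|\bar\nu|^2)P_\gamma(E)$ rather than the useless $\frac{7}{5}$-constant that appears without diagonalization), one obtains an inequality of the form $(1-|\bar\nu|^2)P_\gamma(E) \leq C s^{-2} \int k^2\, d\Ha^1_\gamma + $ controlled lower order terms. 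Solving for $\int k^2$ and using $P_\gamma(E) \leq (1+s^{-2})e^{-s^2/2}$ from \eqref{bound perimeter up} yields $\fint k^2\, d\Ha^1_\gamma \leq 2/s^2$.

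For \eqref{key lemma2}, with \eqref{key lemma1} in hand I then pick $v \in \Sf^1$ to be a basis vector maximizing $|\bar\nu_v|$ (equivalently, the eigenvector of $A$ most aligned with $b$) and test \eqref{second var} once more with $\varphi = \nu_v - \bar\nu_v$. Rearranging, the gradient term satisfies $\int |\nabla_\tau \nu_v|^2\, d\Ha^1_\gamma = \int k^2(1-\nu_v^2)\, d\Ha^1_\gamma \leq \int k^2\, d\Ha^1_\gamma \leq 2P_\gamma(E)/s^2$ by part 1; the barycenter term is bounded via $\epsilon|b| \leq 2/s^2$ from \eqref{usein}; and the last term $\frac{\epsilon}{\sqrt{2\pi}}|\int (\nu_v - \bar\nu_v) x\, d\Ha^1_\gamma|^2$ is bounded using Cauchy-Schwarz, \eqref{standard2}, and the diagonalization (which forces the cross term $\int (\nu_v - \bar\nu_v) x_{v^\perp}\, d\Ha^1_\gamma$ to carry only the contribution $\bar\nu_v \bar x_{v^\perp} P_\gamma(E)$, proportional to $\bar\nu_v$). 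Absorbing the error terms of order $s^{-2}\int (\nu_v - \bar\nu_v)^2\, d\Ha^1_\gamma$ into the left-hand side produces \eqref{key lemma2} with constant $10$.

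The main obstacle is the term $\frac{\epsilon}{\sqrt{2\pi}}|\int \varphi x\, d\Ha^1_\gamma|^2$ in \eqref{second var}: as noted in Remark \ref{why not}, in the range \eqref{choice epsilon up} a direct Cauchy-Schwarz against the Caccioppoli bound gives a constant larger than one, blocking absorption. Overcoming this requires exploiting the fact that in the diagonalized basis the off-diagonal contributions $\int x_i \nu_j\, d\Ha^1_\gamma$ with $i\ne j$ vanish, so that $|\int (\nu_v - \bar\nu_v)x\, d\Ha^1_\gamma|^2$ produces only a term involving $(\int x_v \nu_v\, d\Ha^1_\gamma - \bar\nu_v \bar x_v P_\gamma(E))^2$ plus one proportional to $\bar\nu_v^2$, both of which are small enough to close the estimate.
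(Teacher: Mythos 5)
Your setup (testing \eqref{second var} with $\varphi=\nu_\omega-\bar\nu_\omega$ and reducing, via the equation for $\nu_\omega$, to an inequality of the type \eqref{ineq0}) matches the paper, but the crucial step — controlling the nonlocal term $\frac{\e}{\sqrt{2\pi}}\bigl|\int_{\pa E}(\nu_v-\bar\nu_v)\,x\,d\Ha^1_\gamma\bigr|^2$ — has a genuine gap in your proposal. You choose $v$ to be an eigenvector of $A_{ij}=\int_{\pa E}x_i\nu_j\,d\Ha^1_\gamma$ and claim that in this basis both components of $\int(\nu_v-\bar\nu_v)x\,d\Ha^1_\gamma$ are ``small enough to close the estimate.'' They are not, with the tools you cite. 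The diagonal pairing $\int(\nu_v-\bar\nu_v)(x_v-\bar x_v)\,d\Ha^1_\gamma$ is exactly the quantity that Remark \ref{why not} warns about: estimating it by Cauchy--Schwarz together with the Caccioppoli inequality \eqref{standard2} produces, after multiplication by $\e/\sqrt{2\pi}\le \frac{7}{5s^2}e^{s^2/2}$ and use of \eqref{bound perimeter up}, a constant of order $\frac{7}{5}>1$ in front of $\int(\nu_v-\bar\nu_v)^2$, which cannot be absorbed, and diagonality of $A$ gives you no extra smallness on this diagonal entry. The off-diagonal piece, which by diagonality equals $-\bar\nu_v\,\bar x_{v^\perp}P_\gamma(E)$, is also not harmless: a priori one only knows $|\bar x_{v^\perp}|\le C s$ (Caccioppoli again), so $\frac{\e}{\sqrt{2\pi}}\bar\nu_v^2\,\bar x_{v^\perp}^2P_\gamma(E)^2\le C\,\bar\nu_v^2P_\gamma(E)$ with an order-one constant, i.e.\ $s^2$ times too large; feeding this into \eqref{ineq0} would only give $\fint k^2\le C$ and $\fint(\nu_v-\bar\nu_v)^2\le C\bar\nu_v^2$, far weaker than the stated $2/s^2$ and $10/s^2$. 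The same objection applies to your route to \eqref{key lemma1}, where the bound $\frac{C}{s^2}(1-|\bar\nu|^2)P_\gamma(E)$ for the summed nonlocal terms is asserted without a mechanism.

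The paper's resolution is a different choice of direction, and this is the one idea your proposal is missing. After ordering the basis so that $\int_{\pa E}x_1^2\,d\Ha^1_\gamma\ge\int_{\pa E}x_2^2\,d\Ha^1_\gamma$ (no diagonalization of $A$ is needed), one picks $v\in\Sf^1$ orthogonal to the \emph{vector} $\int_{\pa E}x_1(\nu-\bar\nu)\,d\Ha^1_\gamma$. This is generally not an eigenvector of $A$; its point is that the entire $x_1$-pairing $\int_{\pa E}x_1(\nu_v-\bar\nu_v)\,d\Ha^1_\gamma$ vanishes identically, diagonal and off-diagonal contributions alike, so only the $x_2$-pairing survives. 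Since the ordering gives $\int_{\pa E}x_2^2\,d\Ha^1_\gamma\le\frac12\int_{\pa E}|x|^2\,d\Ha^1_\gamma\le\frac12(s+4)^2e^{-s^2/2}$ by \eqref{standard1} and \eqref{bound perimeter up}, Cauchy--Schwarz then yields the factor $\frac45<1$ with no leftover $\bar\nu_v^2$-term, and a single application of \eqref{ineq0} with this $v$ gives both \eqref{key lemma1} and \eqref{key lemma2} (the degenerate case $\bar\nu_v=0$ forces $\nu_v\equiv0$, hence $k=0$). Without this specific choice — or some substitute argument showing smallness of $\int(\nu_v-\bar\nu_v)(x_v-\bar x_v)\,d\Ha^1_\gamma$ and of $\bar\nu_v\bar x_{v^\perp}$, which you do not provide — the proof does not close.
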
 

Observe that the above estimate implies that $\nu_v$ is close to a constant.
In particular, this excludes the minimizers to be close to the disk. 

\begin{proof}
We begin by showing that for any $\omega \in  \mathbb{S}^1$ it holds
\begin{equation}\label{ineq0}\begin{split}
\bar{\nu}_\unit^2\int_{\pa E}& k^2 \, d \Ha_\gamma^1 
+\int_{\pa E} |\nu_\unit-\bar{\nu}_\unit|^2 \, d \Ha_\gamma^1\\
&\leq\frac{2}{s^2} \, \bar{\nu}_\unit^2 P_\gamma(E)
+ \frac{\e}{\sqrt{2\pi}} \, \Big| \int_{\pa E} (\nu_\unit-\bar{\nu}_\unit)\, x \, d\Ha_\gamma^1 \Big|^2.
\end{split}\end{equation}
To this aim we choose  $\varphi=\nu_\unit-\bar{\nu}_\unit$  as a test function in the second variation condition \eqref{second var}.
We remark that because $\unit$ is not  in general orthogonal to the barycenter $b$,  neither $\nu_\unit$ or  $\nu_\unit-\bar{\nu}_\unit$ 
is an eigenfunction of the operator $L$ associated with the second variation defined in Remark \ref{eigenfunction}.   

We multiply the equation \eqref{eq nu 2}  by $\nu_\unit$ and integrate by parts to obtain
\begin{equation}\label{by part1}
\int_{\pa E} \left(|\nabla_\tau \nu_\omega|^2 -k^2\nu_\unit^2 + \e \la b, \nu \ra\nu_\unit^2\right)\, d \Ha_\gamma^1  
=\e \la b, \unit\ra  \bar{\nu}_\unit P_\gamma(E), 
\end{equation}
and simply integrate \eqref{eq nu} over $\pa E$ to get 
\begin{equation}\label{by part2}
\int_{\pa E} \left(k^2\nu_\unit - \e \la b, \nu \ra\nu_\unit\right) \, d \Ha_\gamma^1
=- \e  \la b, \unit\ra P_\gamma(E).
\end{equation}
Hence, by also using $\bar{\nu} P_\gamma(E) =  - \sqrt{2 \pi} \, b$ (see \eqref{usein 2}),  we may write  
\begin{equation*}
\begin{split}
\int_{\pa E}& \left(|\nabla_\tau \nu_\omega|^2 -k^2(\nu_\unit-\bar{\nu}_\unit)^2 
+ \e \la b, \nu \ra(\nu_\unit-\bar{\nu}_\unit)^2\right)\, d \Ha_\gamma^1\\
&=-\bar{\nu}_\unit^2\int_{\pa E}k^2\, d \Ha_\gamma^1
+\e\la b,\bar{\nu}\ra\bar{\nu}_\unit^2P_\gamma(E)-\e \la b, \unit \ra \,  \bar{\nu}_\unit P_\gamma(E)\\
&=-\bar{\nu}_\unit^2\int_{\pa E}k^2\, d \Ha_\gamma^1
+\frac{\e}{\sqrt{2\pi}}\, (1-|\bar{\nu}|^2)\bar{\nu}_\unit^2 P^2_\gamma(E)\\
&\leq-\bar{\nu}_\unit^2\int_{\pa E}k^2\, d \Ha_\gamma^1
+\frac{2}{s^2}\bar{\nu}_\unit^2 P_\gamma(E),
\end{split}
\end{equation*}
where in the last inequality we have used  the estimates  \eqref{choice epsilon up} and \eqref{bound perimeter up}. 
The above inequality and the second variation condition \eqref{second var} with  $\varphi=\nu_\unit-\bar{\nu}_\unit$ imply  \eqref{ineq0}.

Let us consider an orthonormal basis $\{e^{(1)}, e^{(2)}\}$ of $\R^2$ and assume 
$\int_{\partial E}x_1^2 \,d \Ha_\gamma^1\geq\int_{\partial E}x_2^2 \,d\Ha_\gamma^1$.  
As in  \eqref{key1}, we  use  the Caccioppoli estimate \eqref{standard1} and  \eqref{bound perimeter up} to  get
\begin{equation}\begin{split}\label{x hat 2}
\int_{\pa E}  x_2^2 \, d \Ha_\gamma^1 
&\leq \frac{1}{2}\int_{\partial E}(x_1^2+x_2^2) \, \Ha_\gamma^1\\
&\leq \frac{1}{2}\bigl[(s+2)^2+16\bigr] P_\gamma(E)
\leq \frac{1}{2}\bigl[(s+4)^2\bigr]e^{-\frac{s^2}{2}}.
\end{split}\end{equation}
We choose  a direction $v \in\Sf^1$ which is orthogonal to the vector
$\int_{\partial E}x_1(\nu-\bar{\nu}) \,d \Ha_\gamma^1$.
Since $\fint_{\pa E} x_1 (\nu_v- \bar{\nu}_v) \, d \Ha_\gamma^1= \la\fint_{\pa E} x_1 (\nu- \bar{\nu}) \, d \Ha_\gamma^1,v\ra=0$, we have
\begin{equation*}
 \Big| \fint_{\pa E} x (\nu_v- \bar{\nu}_v) \, d \Ha_\gamma^1 \Big| 
 = \Big| \fint_{\pa E} x_2 (\nu_v- \bar{\nu}_v) \, d \Ha_\gamma^1 \Big|.
\end{equation*}
Then, by the above equality, by Cauchy-Schwarz inequality and by  \eqref{x hat 2} we have 
\begin{equation*}\begin{split}
 \Big| \int_{\partial E} x (\nu_v - \bar{\nu}_v)  \, d \Ha_\gamma^{1} \Big|^2 
 &= \left(  \int_{\partial E} x_2 (\nu_v - \bar{\nu}_v)  \, d \Ha_\gamma^{1} \right)^2\\
&\leq   \left(  \int_{\partial E} x_2^2\, d \Ha_\gamma^{1} \right)  
\left(  \int_{\partial E}  (\nu_v - \bar{\nu}_v)^2  \, d \Ha_\gamma^{1} \right)\\
&\leq   \frac{(s+4)^2}{2} e^{-\frac{s^2}{2}}  \left(  \int_{\partial E}  (\nu_v - \bar{\nu}_v)^2  \, d \Ha_\gamma^{1} \right).
\end{split}\end{equation*}
With the bound  $\e \leq\frac{7\sqrt{2 \pi}}{5s^2}  e^{\frac{s^2}{2}}$   (see \eqref{choice epsilon up}), the previous inequality yields
\begin{equation*}
\frac{\e}{\sqrt{2\pi}} \, \Bigl| \int_{\partial E} (\nu_v-\bar{\nu}_v)\,x\, d \Ha_\gamma^1 \Bigr|^2 
\leq\frac{4}{5}\int_{\partial E} (\nu_v-\bar{\nu}_v)^2\, d \Ha_\gamma^1.
\end{equation*}
Hence, the   inequality \eqref{ineq0} implies 
\begin{equation*}
\bar{\nu}_v^2\int_{\partial E} k^2  \, d \Ha_\gamma^{1} 
+ \frac{1}{5}\int_{\partial E} (\nu_v - \bar{\nu}_v)^2  \, d \Ha_\gamma^{1}  
\leq\frac{2}{s^2} \, \bar{\nu}_v^2 P_\gamma(E).
\end{equation*}
From this inequality we have immediately \eqref{key lemma2}, and also \eqref{key lemma1}, if $\bar{\nu}_v$ is not zero.
If instead $\bar{\nu}_v=0$, then also $\nu_v = 0$ by  \eqref{key lemma2}. Thus  $\pa E$ is flat, $k = 0$  and \eqref{key lemma1} holds again.
\end{proof}

\medskip
We will also need the following auxiliary result.   
\begin{lemma} 
Let $E \subset \R^2$ be a minimizer of  \eqref{functional}. Then, for every $x \in \pa E$ it holds 
\begin{equation}\label{aux 1}
|x| \geq s - 1.
\end{equation} 
\end{lemma}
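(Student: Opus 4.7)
The plan is to argue by contradiction. Suppose there is a point $x_0 \in \pa E$ with $|x_0| < s-1$. The strategy is to exhibit a macroscopic piece of the reduced boundary inside a small ball around $x_0$, and to show that, because the Gaussian weight at $x_0$ is vastly larger than $e^{-s^2/2}$, this contradicts the sharp upper bound $P_\gamma(E) \leq (1 + 1/s^2) e^{-s^2/2}$ from \eqref{bound perimeter up}.

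First I would extract a pointwise curvature bound. The Euler equation \eqref{euler 2d} together with the bound $|\lambda| \leq s+1$ from Theorem \ref{thm reg} and the estimate $\e|b| \leq 2/s^2$ from \eqref{usein} gives
\[
|k(x)| \leq (s+1) + |x| + \tfrac{2}{s^2}|x|
\]
for every $x \in \pa E$. In particular, for every point $x \in \pa E$ with $|x| \leq s$ one has $|k(x)| \leq 3s$. So, in a ball $B_{1}(x_0)$ around the contradicting point, the curvature is bounded by $K := 3s$.

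Next I would exploit smoothness of $\pa E$ (recall that $\pa E = \pa^* E$ in dimension $n=2$ by Theorem \ref{thm reg}) to lower-bound the $\Ha^1$-measure of $\pa E$ inside a small ball centered at $x_0$. Parametrize $\pa E$ locally by arc length $\gamma:(-L,L)\to \pa E$ with $\gamma(0)=x_0$; then $|\gamma''|=|k|\leq K$ on this piece. Choosing $r = 1/(6s)$, the curvature bound yields $\gamma((-r,r)) \subset B_r(x_0) \cap \pa E$ with $\Ha^1(\gamma((-r,r))) = 2r$. The standard open-mapping argument shows the parametrization really extends to $[-r,r]$ before the curve can leave $B_r(x_0)$, because the velocity has unit length. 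Hence
\[
\Ha^1(\pa E \cap B_r(x_0)) \geq \tfrac{1}{3s}.
\]

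Finally, within $B_r(x_0)$ every point $y$ satisfies $|y| \leq |x_0| + r < (s-1) + \tfrac{1}{6s} < s - \tfrac12$ for $s$ large. Therefore
\[
P_\gamma(E) \geq \Ha^1_\gamma(\pa E \cap B_r(x_0)) \geq \frac{1}{\sqrt{2\pi}}\,e^{-(s-1/2)^2/2}\,\Ha^1(\pa E \cap B_r(x_0)) \geq \frac{c}{s}\,e^{s/2 - 1/8}\,e^{-s^2/2},
\]
which, for $s$ large, grossly exceeds $(1+1/s^2)e^{-s^2/2}$ from \eqref{bound perimeter up}. This contradiction forces $|x| \geq s-1$ on $\pa E$. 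The only mildly delicate step is making the local arc-length parametrization rigorous given that a priori $\pa E$ could be noncompact or topologically nontrivial; but since $\pa E$ is a relatively open smooth hypersurface and the curvature is bounded on $B_1(x_0)$, the usual ODE existence argument for the tangent flow supplies the needed interval $[-r,r]$ of parametrization inside $B_r(x_0)$.
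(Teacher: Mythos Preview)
Your overall strategy matches the paper's: argue by contradiction, exhibit a definite amount of boundary near the offending point, and contradict \eqref{bound perimeter up} via the large Gaussian weight there. However, one step is not justified as written: the equality $\Ha^1(\gamma((-r,r))) = 2r$. The containment $\gamma((-r,r)) \subset B_r(x_0)$ holds by unit speed alone and uses no curvature bound; what you actually need is that the arc-length parametrization is \emph{injective} on $(-r,r)$. This fails precisely when the connected component $\Gamma$ of $\pa E$ through $x_0$ is a closed Jordan curve of length $L < 2r = 1/(3s)$, in which case the image has length only $L$, which could a priori be arbitrarily small. Your pointwise curvature bound does in fact exclude this --- since then $\Gamma \subset B_r(x_0) \subset B_1(x_0)$, so $|k| \leq 3s$ on $\Gamma$, and the Umlaufsatz gives $2\pi = \big|\int_\Gamma k\,d\Ha^1\big| \leq 3s\,L$, forcing $L \geq 2\pi/(3s) > 2r$ --- but this must be spelled out. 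The ``ODE existence argument'' you invoke at the end addresses extendability of the parametrization, not its injectivity.

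The paper handles exactly this dichotomy by an explicit case split: either some component of $\pa E$ lies entirely inside $B_{1/2}(\tilde{x})$, in which case integrating the Euler equation over it and combining Gauss--Bonnet with the planar isoperimetric inequality yields $\Ha^1(\tilde{\Gamma}) \geq 1/s$; or else the boundary curve exits $B_{1/2}(\tilde{x})$, giving $\Ha^1(\pa E \cap B_{1/2}(\tilde{x})) \geq 1/2$ for free. Your route via the pointwise curvature bound is a bit more streamlined once the missing injectivity step is filled in, and both arguments ultimately rest on the same two ingredients: the Euler equation and the total-curvature identity for closed planar curves.
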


\begin{proof}
We argue by contradiction and assume that there exists $\tilde{x} \in \pa E $ such that $|\tilde{x}| < s - 1$.  We claim that then it holds 
\begin{equation}
\label{aux lem 1 1}
\Ha^1(\pa E \cap B_{1/2}(\tilde{x})) \geq \frac{1}{s} .
\end{equation}
We remark that $\Ha^1$ is the standard Hausdorff measure, i.e., $\Ha^1(\pa E \cap B_{1/2}(\tilde{x}))$ denotes the length of the curve.  
We divide the proof of \eqref{aux lem 1 1} in two cases.

Assume first that there is a component  of $\pa E$, say $\tilde{\Gamma}$, which is  contained in the disk $B_{1/2}(\tilde{x})$. 
By regularity, $\tilde{\Gamma}$ is a smooth Jordan curve which encloses a bounded set $\tilde{E}$, i.e., $\tilde{\Gamma} = \pa \tilde{E}$. 
Note that then it holds $\tilde{E} \subset B_{R}$ for $ R= s - 1/2$. We integrate the Euler equation \eqref{euler 2d} over  $\pa \tilde{E}$  
with respect to the standard Hausdorff measure and obtain  by the Gauss-Bonnet formula and by the divergence theorem that 
\begin{equation} \label{lem aux 1 0}
\begin{split}
2 \pi  = \int_{\tilde{\Gamma}} k \, d \Ha^1 &=  \int_{\tilde{\Gamma}} \left( \la x, \nu \ra  + \lambda  - \e \la b, x \ra \right) \, d \Ha^1 \\
&\leq 2 |\tilde{E}| + \left(|\lambda| + \frac{2}{s}\right) \Ha^1(\tilde{\Gamma}),
\end{split}
\end{equation}
where in the last inequality we have used $\e |b| \leq \frac{2}{s^2}$ (proved in \eqref{usein})  and the fact that  for all $x \in \tilde{E}$ 
it holds $|x| \leq s - 1/2$.  The isoperimetric inequality in $\R^2$ implies
\[
|\tilde{E}|  \leq \frac{1}{4 \pi^2}\Ha^1(\tilde{\Gamma})^2.
\]
Therefore since  $|\lambda| \leq s + 1$  we obtain  from \eqref{lem aux 1 0} that
\[
2 \pi  \leq  \frac{1}{2 \pi^2} \Ha^1(\tilde{\Gamma})^2 + (s+2)  \Ha^1(\tilde{\Gamma}).
\]
This implies $\Ha^1(\tilde{\Gamma}) \geq \frac{1}{s}$ and the claim \eqref{aux lem 1 1}  follows.

Let us then assume that  no component of $\pa E$ is contained in $B_{1/2}(\tilde{x})$. In this case the boundary curve  
passes $\tilde{x}$ and exists the disk $B(\tilde{x},\frac12)$.  In particular,  it holds  
$\Ha^1(\pa E \cap B_{1/2}(\tilde{x}))\geq 1/2$ which implies \eqref{aux lem 1 1}.

Since  for all $x \in \pa E \cap B_{1/2}(\tilde{x})$ it holds $|x| \leq s -1/2$, the estimate \eqref{aux lem 1 1}  implies     
\[
\begin{split}
P_\gamma(E) &\geq \frac{1}{\sqrt{2\pi}} \int_{\pa E \cap B_{1/2}(\tilde{x})} e^{-\frac{|x|^2}{2}} \, d \Ha^1  \\
&\geq \frac{1}{\sqrt{2\pi} } e^{-\frac{(s -1/2)^2}{2}} \Ha^1(\pa E \cap B_{1/2}(\tilde{x})) 
\geq 2  e^{-\frac{s^2}{2}}.
\end{split}
\]
This contradicts \eqref{bound perimeter up}.
\end{proof}

\medskip
For the remaining part of this section we choose a basis $\{e^{(1)},e^{(2)}\}$ for $\R^2$ such that $e^{(1)} = v$, 
where $v$ is the direction  in Lemma~\ref{key lemma} and $e^{(2)}$ is an orthogonal direction to that. 
The disadvantage of Lemma \ref{key lemma} is that the argument does not seem to give us any  information on $\nu_2 = \la\nu,e^{(2)} \ra$. 
However, by studying closely the proof of Lemma \ref{key lemma} we may reduce to the case when it holds
\begin{equation}
\label{nu 2 big}
\fint_{\pa E} (\nu_2 -\bar{\nu}_2)^2 \, d \Ha_\gamma^1 \geq \frac47.
\end{equation}
Indeed, we conclude below that if \eqref{nu 2 big} does not hold then  the argument of the proof of  Lemma~\ref{key lemma} 
yields that the minimizer is one-dimensional. 
In fact, by the one dimensional analysis in Section~\ref{section 1d} we deduce that if \eqref{nu 2 big} does not hold then the minimizer is the half-space. 

To  show  \eqref{nu 2 big}, we argue by contradiction, in which case it holds 
$\fint_{\pa E}  (\nu_2 -\bar{\nu}_2)^2 \, d \Ha_\gamma^1 < \frac{4}{7}$. 
Then the Caccioppoli estimate \eqref{standard2} yields
\[
\fint_{\pa E} (x_2- \bar{x}_2)^2 \, d \Ha_\gamma^1  \leq \frac47(s+2)^2 + 8
\]
while again by \eqref{standard2} and by \eqref{key lemma2} from Lemma \ref{key lemma} we have
\[
\fint_{\pa E} (x_1-\bar{x}_1)^2 \, d \Ha_\gamma^1  \leq (s+2)^2\fint_{\pa E} (\nu_1-\bar{\nu}_1)^2 \, d \Ha_\gamma^1 + 8 \leq C. 
\]
Let  $e \in \Sf^1$ be orthogonal to the barycenter $b$. We now apply the argument in the proof of Lemma \ref{key lemma} for the 
test function $\varphi = \nu_e$. By the two above inequalities and by \eqref{bound perimeter up} we have 
\[
\begin{split}
\Bigl| \int_{\partial E} x\, \nu_e \, d \Ha_\gamma^{1} \Bigr|^2 
=&\Bigl( \int_{\partial E}  (x_1-\bar{x}_1) \nu_e \, d \Ha_\gamma^{1} \Bigr)^2 
+ \Bigl( \int_{\partial E} (x_2- \bar{x}_2) \nu_e \, d \Ha_\gamma^{1} \Bigr)^2\\
\leq& \left(\int_{\partial E}  (x_1-\bar{x}_1)^2 +(x_2- \bar{x}_2)^2 \, d \Ha_\gamma^{1} \right) 
\int_{\partial E} \nu_e^2\, d \Ha_\gamma^{1}\\
\leq&\frac{9}{13}s^2e^{-\frac{s^2}{2}}\int_{\partial E} \nu_e^2\, d \Ha_\gamma^{1}.
\end{split}
\]
In other words,  since $\e \leq\frac{7\sqrt{2 \pi}}{5s^2}  e^{\frac{s^2}{2}}$ we conclude that  the crucial 
estimate \eqref{ineq2} in the proof of Theorem \ref{n to 2} holds for  
a direction orthogonal to the barycenter and thus by Remark \ref{rem b=0} 
we conclude that $\nu_e = 0$. Hence, we may assume from now on that  \eqref{nu 2 big} holds.

\medskip

Let us  define
\[
\Sigma_+ = \{ x \in \pa E : x_2 > 0\} \qquad \text{and} \qquad \Sigma_- = \{ x \in \pa E : x_2 < 0\}. 
\]
 In the next lemma we use \eqref{key lemma2} from Lemma \ref{key lemma} and \eqref{nu 2 big}  to  conclude first  
 that $\Sigma_+$ and $\Sigma_-$ are flat in shape. The second estimate in the next lemma states  roughly speaking that the 
 Gaussian measure of $\{x \in \pa E : |x_2| \leq \tfrac{s}{3} \}$  is small. The latter estimate implies that, from measure 
 point of view, $\Sigma_+$ and $\Sigma_-$ are almost disconnected. 
This enables us to  variate  $\Sigma_+$ and $\Sigma_-$ separately, which will be crucial in the proof of Theorem \ref{2 to 1}. 
Recall that, given a function $f : \pa E \to \R$, we denote $(f)_{\Sigma_+} = \fint_{\Sigma_+} f\, d \Ha_\gamma^1$ 
and $(f)_{\Sigma_-} :=  \fint_{\Sigma_-} f\, d \Ha_\gamma^1$.

\begin{lemma}\label{extra info}
Let $E \subset \R^2$ be a minimizer of \eqref{functional} and assume \eqref{nu 2 big} holds. Then we have the following: 
\begin{equation}\label{extra2}
\int_{\Sigma_\pm} (x_i - (x_i)_{\Sigma_\pm})^2 \leq CP_\gamma(E) , \qquad \text{for } i = 1,2
\end{equation}
\begin{equation}\label{extra1}
\int_{\pa E \cap \{ |x_2| \leq  \tfrac{s}{3} \}}|x|^2 \, d \Ha_\gamma^1 \leq C\int_{\pa E} \nu_1^2 \, d \Ha_\gamma^1
\end{equation}
\end{lemma}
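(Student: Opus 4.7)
The plan is to establish the two estimates in order of difficulty: first \eqref{extra2} with $i=1$ via the global Caccioppoli, then the crucial \eqref{extra1}, and finally \eqref{extra2} with $i=2$ by a localized Caccioppoli whose boundary terms are controlled by \eqref{extra1}. For the $i=1$ case of \eqref{extra2}, I apply the Caccioppoli estimate \eqref{standard2} with $\omega=e^{(1)}$; by \eqref{key lemma2} we have $\int_{\pa E}(\nu_1-\bar\nu_1)^2\,d\Ha_\gamma^1\leq(10/s^2)\bar\nu_1^2 P_\gamma(E)\leq(10/s^2) P_\gamma(E)$, so the $(s+2)^2$ factor in \eqref{standard2} is absorbed and $\int_{\pa E}(x_1-\bar x_1)^2\,d\Ha_\gamma^1 \leq CP_\gamma(E)$. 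Restricting to $\Sigma_\pm$ and using that $(x_1)_{\Sigma_\pm}$ is the $L^2(\Sigma_\pm;\Ha_\gamma^1)$-optimal constant gives \eqref{extra2} for $i=1$.

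For \eqref{extra1}, I test the equation \eqref{eq x 2} for $x_1$ against $\psi = x_1\chi(x_2)$, where $\chi$ is a smooth cutoff equal to $1$ on $\{|x_2|\leq s/3\}$, supported in $\{|x_2|\leq s/2\}$, with $|\chi'|\leq C/s$. Using the identities $|\nabla_\tau x_1|^2=\nu_2^2$ and $\la\nabla_\tau x_1,\nabla_\tau x_2\ra=-\nu_1\nu_2$, and then substituting $\lambda = k-\la x,\nu\ra + \e\la b,x\ra$ from the Euler equation \eqref{euler 2d} (which makes the $\e\la b,x\ra$-contributions cancel), I reach the clean identity
\[
\int_{\pa E}\chi\,\nu_2^2(x_1^2-1)\,d\Ha_\gamma^1 = -\int_{\pa E} x_1\chi'\,\nu_1\nu_2\,d\Ha_\gamma^1 - \int_{\pa E}\chi\,x_1 k\,\nu_1\,d\Ha_\gamma^1 + \int_{\pa E}\chi\,x_1 x_2\,\nu_1\nu_2\,d\Ha_\gamma^1.
\]
On the support of $\chi$, \eqref{aux 1} combined with $|x_2|\leq s/2$ yields $x_1^2\geq(s-1)^2-s^2/4\geq s^2/4$ for large $s$, so the left-hand side dominates $(s^2/8)\int_{\pa E}\chi\,\nu_2^2\,d\Ha_\gamma^1$. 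Each right-hand-side term contains a factor $\nu_1$, which I bound by Cauchy--Schwarz and Young using the Caccioppoli bound \eqref{standard1} for $\int x_1^2$, the curvature smallness \eqref{key lemma1} for $\int k^2$, together with $|\chi'|\leq C/s$, $|x_2|\leq s/2$, $|\nu_2|\leq 1$. After bookkeeping, this yields $\int_{\pa E}\chi\,d\Ha_\gamma^1 \leq (C/s^2)\int_{\pa E}\nu_1^2\,d\Ha_\gamma^1$, and multiplying by $|x|^2\leq 2 x_1^2\leq Cs^2$ on $\{|x_2|\leq s/3\}\cap\pa E$ gives \eqref{extra1}.

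For \eqref{extra2} with $i=2$, the global Caccioppoli is too weak since \eqref{nu 2 big} prevents $\int(\nu_2-\bar\nu_2)^2$ from being small. I instead localize: multiply \eqref{eq x 2} for $x_2$ by $(x_2-(x_2)_{\Sigma_\pm})$ and integrate over $\Sigma_\pm$. The tangential integration by parts on the submanifold $\Sigma_\pm$ produces a boundary contribution at $\partial\Sigma_\pm\subset \pa E\cap\{x_2=0\}\subset\{|x_2|\leq s/3\}$, which is controlled by \eqref{extra1} plus a Gaussian-tail estimate at these isolated crossing points. The main term $\int_{\Sigma_\pm}|\nabla_\tau x_2|^2 = \int_{\Sigma_\pm}\nu_1^2\leq CP_\gamma(E)$ is bounded as in the first paragraph, and the remaining Lagrange-multiplier and $\e$-terms are absorbed via Young's inequality.

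The main technical difficulty is \eqref{extra1}. The cancellation of the $\e$-terms via the Euler-equation substitution is delicate, and the lower bound $x_1^2\geq cs^2$ from \eqref{aux 1} is indispensable to extract the factor $1/s^2$ on the right-hand side. The secondary difficulty is the boundary-term analysis in the $i=2$ case of \eqref{extra2}, which is precisely where \eqref{extra1} enters as essential input.
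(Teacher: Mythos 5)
Your identity for \eqref{extra1} is correct (and close in spirit to the paper's computation, which tests \eqref{eq x 2} against $x_1\zeta^2(x_2)$), but the ``bookkeeping'' you defer does not close, for three concrete reasons. First, after substituting the Euler equation \eqref{euler 2d} your left-hand side is $\int_{\pa E}\chi\,\nu_2^2(x_1^2-1)\,d\Ha_\gamma^1$, i.e.\ it is weighted by $\nu_2^2$ and therefore gives \emph{no} control on the part of $\pa E\cap\{|x_2|\le s/3\}$ where the boundary is nearly vertical ($\nu_1^2$ close to $1$). Passing from this to $\int\chi\,x_1^2$ (or even to $\int\chi$, as you claim) requires showing that the set $\{\nu_1^2>1/2\}$ has $\Ha_\gamma^1$-measure at most $\frac{C}{s^2}\int_{\pa E}\nu_1^2\,d\Ha_\gamma^1$; this is exactly where the paper uses \eqref{key lemma2} together with the hypothesis \eqref{nu 2 big} (which forces $\bar\nu_1^2\le 3/7<1/2$). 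You never invoke \eqref{nu 2 big} in your argument for \eqref{extra1}, which is a sign the degenerate region is unaccounted for. Second, your substitution introduces the curvature term $\int\chi\,x_1 k\,\nu_1$, and this cannot be estimated by $C\int\nu_1^2$ plus an absorbable term using only \eqref{key lemma1} and \eqref{standard1}: any Cauchy--Schwarz/Young split leaves either $\int\chi\,x_1^2\nu_1^2$ (wrong weight, not absorbable into your $\nu_2^2$-weighted left side) or an error of size a power of $s$ times $P_\gamma(E)$, which can vastly exceed $\int\nu_1^2$ (for $E$ near the strip $\{|x_2|<a\}$ one has $\int\nu_1^2\approx 0$ while $P_\gamma(E)\approx e^{-s^2/2}$). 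The paper avoids $k$ entirely here by \emph{not} substituting the Euler equation: it keeps the multiplier with $|\lambda|\le s+1$, pays $(s+1)^2\nu_1^2\zeta^2$ via Young, and disposes of that term with the level-set estimate just described. Third, your final step ``multiplying by $|x|^2\le 2x_1^2\le Cs^2$'' presumes a pointwise upper bound on $|x_1|$ along $\pa E\cap\{|x_2|\le s/3\}$, which is not available (\eqref{aux 1} is only a lower bound); one must bound $\int\chi\,x_1^2$ directly, as the paper does.

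The case $i=2$ of \eqref{extra2} is also not done. Localizing the Caccioppoli computation on $\Sigma_\pm$ produces boundary contributions at the crossing points $\pa E\cap\{x_2=0\}$ that are pointwise quantities: neither their number nor the values of $x_2-(x_2)_{\Sigma_\pm}$, $\nabla_\tau x_2$, etc.\ there are controlled, and \eqref{extra1} (an integral bound over the strip) plus ``a Gaussian tail'' does not dominate them. Moreover, even ignoring the boundary terms, your claim that the Lagrange-multiplier term ``is absorbed via Young's inequality'' fails: Young yields $\frac{\lambda^2}{2}\int_{\Sigma_+}(\nu_2-(\nu_2)_{\Sigma_+})^2\,d\Ha_\gamma^1$, and to make this $\le CP_\gamma(E)$ you need the $L^2$-oscillation of $\nu_2$ on $\Sigma_\pm$ to be of order $1/s$ --- a fact you never establish. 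This is the actual heart of the paper's argument for $i=2$: since $n=2$, $\nu_1^2+\nu_2^2=1$ and \eqref{key lemma2} with \eqref{nu 2 big} give $\int_{\Sigma_+}\bigl(|\nu_2|-(|\nu_2|)_{\Sigma_+}\bigr)^2\,d\Ha_\gamma^1\le\frac{C}{s^2}P_\gamma(E)$; the paper then avoids any localization/boundary terms by testing \eqref{eq x 2} globally against $x_2+\lambda\nu_2$, obtaining $\int_{\pa E}(x_2+\lambda\nu_2)^2\le CP_\gamma(E)$, and only afterwards restricting to $\Sigma_+$, where $x_2>0$ allows replacing $\lambda\nu_2$ by $\lambda|\nu_2|$ and then by the constant $\lambda(|\nu_2|)_{\Sigma_+}$. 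Your proposal is missing both this near-constancy of $|\nu_2|$ and a workable substitute for the localized integration by parts, so the $i=2$ estimate remains open in your scheme. (The $i=1$ case via \eqref{standard2} and \eqref{key lemma2} is fine and matches the paper.)
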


\begin{proof} \textbf{Inequality \eqref{extra2}.}
We first observe that the claim \eqref{extra2} is almost trivial for $i =1$. Indeed, by the Caccioppoli estimate \eqref{standard2}
and by \eqref{key lemma2} from Lemma \ref{key lemma} (recall that we have chosen $e^{(1)}= v$) we have  
\[
\begin{split}
\int_{\Sigma_+} (x_1 - (x_1)_{\Sigma_+})^2 \, d \Ha_\gamma^1&\leq  \int_{\Sigma_+} (x_1 - \bar{x}_1)^2 \, d \Ha_\gamma^1 \\
&\leq \int_{\pa E} (x_1 - \bar{x}_1)^2 \, d \Ha_\gamma^1 \\
&\leq  (s+2)^2 \int_{\pa E} (\nu_1 - \bar{\nu}_1)^2 \, d \Ha_\gamma^1 + 8 P_\gamma(E)\\
&\leq CP_\gamma(E).
\end{split}
\]
Thus we need to prove \eqref{extra2} for $i =2$.

We first show that 
\begin{equation}
\label{extra 2 1}
\int_{\Sigma_+} \left(|\nu_2| - (|\nu_2|)_{\Sigma_+}\right)^2 \, d \Ha_\gamma^1 \leq \frac{C}{s^2} P_\gamma(E).
\end{equation}
Note  that  \eqref{nu 2 big}  implies $\fint_{\pa E} \nu_2^2  \, d \Ha_\gamma^1  \geq \frac47$. By Jensen's inequality  we then have 
\begin{equation}
\label{extra 2 ???}
 \bar{\nu}_1^2 \leq \fint_{\pa E} \nu_1^2 \, d \Ha_\gamma^1 \leq \frac{3}{7}.
\end{equation}
Therefore  we deduce by \eqref{extra 2 ???} and  by \eqref{key lemma2}  
\[
\begin{split}
\int_{\Sigma_+} \left(|\nu_2| - \sqrt{1 - \bar{\nu}_1^2}\right)^2 \, d \Ha_\gamma^1 
&=  \int_{\Sigma_+} \frac{\left(\nu_2^2 - (1 - \bar{\nu}_1^2)\right)^2}{\big(|\nu_2| + \sqrt{1 - \bar{\nu}_1^2}\, \big)^2 } \, d \Ha_\gamma^1 \\
&\leq 2 \int_{\pa E} (\nu_1^2 -  \bar{\nu}_1^2)^2 \, d \Ha_\gamma^1 \\
&\leq 8 \int_{\pa E} (\nu_1 -  \bar{\nu}_1)^2 \, d \Ha_\gamma^1 \\
&\leq \frac{C}{s^2} P_\gamma(E).
\end{split}
\]
Since 
\[
\int_{\Sigma_+} \left(|\nu_2| - (|\nu_2|)_{\Sigma_+}\right)^2 \, d \Ha_\gamma^1 
\leq \int_{\Sigma_+} \left(|\nu_2| - \sqrt{1 - \bar{\nu}_1^2}\right)^2 \, d \Ha_\gamma^1 
\]
we have \eqref{extra 2 1}. 

To prove the inequality \eqref{extra2}  for $i=2$ we multiply the equation \eqref{eq x 2}, with $\unit = e_2$, 
by $(x_2+\lambda \nu_2)$ and integrate by parts 
\[
\int_{\pa E} (x_2  + \lambda \nu_2)^2 \, d \Ha_\gamma^1 \leq \int_{\pa E} 
\la \nabla_\tau (x_2 + \lambda \nu_2), \nabla_\tau x_2 \ra - \e \la b, x \ra \nu_2(x_2 + \lambda \nu_2) \, d \Ha_\gamma^1.
\]
We estimate the first term on the right-hand-side by Young's inequality and  by $|\lambda| \leq s+1$
\[
\la \nabla_\tau (x_2 + \lambda \nu_2), \nabla_\tau x_2 \ra \leq 2|\nabla_\tau x_2|^2 + \lambda^2 |\nabla_\tau \nu_2|^2 \leq 2 + (s +1)^2k^2
\]
and the second as
\[
\e \la b, x \ra \nu_2(x_2 + \lambda \nu_2) \leq 2 \e |b| \left( |x|^2 + (s +1)^2 \nu_2^2 \right).
\]
Hence, we have by $\e |b|\leq \frac{2}{s^2}$ (proved in \eqref{usein}), \eqref{standard2} and \eqref{key lemma1}  that 
\[
\begin{split}
\int_{\pa E} (x_2  + \lambda \nu_2)^2 \, d \Ha_\gamma^1 &\leq \int_{\pa E} \left(  2+ (s +1)^2 k^2 
+ \frac{4}{s^2}( |x|^2 + (s +1)^2\nu_2^2)\right)  \, d \Ha_\gamma^1 \\
&\leq C P_\gamma(E).
\end{split}
\]  
Therefore  it holds (recall that $x_2 >0$ on $\Sigma_+$)
\[
\begin{split}
C P_\gamma(E) &\geq \int_{\pa E} (x_2  + \lambda \nu_2)^2 \, d \Ha_\gamma^1 \geq \int_{\Sigma_+} (x_2  + \lambda \nu_2)^2 \, d \Ha_\gamma^1  \\
&\geq \int_{\Sigma_+} \left(|x_2|  - \lambda |\nu_2|\right)^2 \, d \Ha_\gamma^1 = \int_{\Sigma_+} \left(x_2 - \lambda |\nu_2|\right)^2 \, d \Ha_\gamma^1  \\
&\geq \frac12 \int_{\Sigma_+} (x_2  - \lambda (|\nu_2|)_{\Sigma_+})^2 \, d \Ha_\gamma^1 -2 \lambda^2 \int_{\Sigma_+} (|\nu_2| - (|\nu_2|)_{\Sigma_+})^2 \, d \Ha_\gamma^1. 
\end{split}
\]  
Hence, by \eqref{extra 2 1} and $|\lambda| \leq s+1$ we deduce
\[
 \int_{\Sigma_+} (x_2  - \lambda (|\nu_2|)_{\Sigma_+})^2 \, d \Ha_\gamma^1 \leq CP_\gamma(E).
\]  
The claim then follows from
\[
\int_{\Sigma_+} (x_2  -  (x_2)_{\Sigma_+})^2 \, d \Ha_\gamma^1  \leq  \int_{\Sigma_+} (x_2  - \lambda (|\nu_2|)_{\Sigma_+})^2 \, d \Ha_\gamma^1. 
\]

\bigskip
\noindent
\textbf{Inequality \eqref{extra1}.} We choose a smooth cut-off function $\zeta : \R \to [0,1]$ such that 
\[
\zeta(t) = \begin{cases}
1 & \text{for } \, |t| \leq \frac{s}{3}\\
0 & \text{for } \,  |t| \geq \frac{s}{2}
\end{cases}
\]
and
\[
|\zeta'(t)| \leq \frac{8}{s} \qquad \text{for   } \, t \in \R.  
\]
We multiply  the equation \eqref{eq x 2}, with $\unit=e_1$,  by $x_1 \zeta^2(x_2)$ and integrate by parts 
\begin{equation} \label{extra11}
\int_{\pa E} x_1^2 \zeta^2(x_2)\, d \Ha_\gamma^1  = \int_{\pa E}\big( -\lambda x_1 \nu_1 \zeta^2(x_2) 
+\la \nabla_\tau x_1 , \nabla_\tau (x_1 \zeta^2(x_2)) \ra + \e \la b,x\ra \nu_1 x_1 \zeta^2(x_2)\big) \, d \Ha_\gamma^1.
\end{equation}
We estimate the first term on right-hand-side by Young's inequality and by $|\lambda| \leq s+1$ 
\[
-\lambda x_1 \nu_1 \zeta^2(x_2) \leq  \frac12 x_1^2 \zeta^2  + \frac{(s+1)^2}{2} \nu_1^2\zeta^2,
\]
where  we have written $\zeta = \zeta(x_2)$ for short.  We estimate  the second term by using $|\nabla_\tau \zeta(x_2)|  = |\zeta'(x_2)| |\nabla x_2| \leq \frac{8}{s}|\nu_1|$ as follows
\[
\begin{split}
\la \nabla_\tau x_1 , \nabla_\tau (x_1 \zeta^2(x_2)) \ra &\leq |\nabla_\tau x_1|^2\zeta^2 +\frac{16}{s} \zeta |x_1| |\nabla_\tau x_1||\nu_1|\\
&\leq \zeta^2 + \frac{1}{20} x_1^2\zeta^2 + \frac{C}{s^2} \nu_1^2.
\end{split}
\]
We estimate the third term  simply by using $\e |b| \leq \frac{2}{s^2}$  
\[
\e \la b,x\ra \nu_1 x_1 \zeta^2(x_2)  \leq \frac{2}{s^2}|x|^2\zeta^2.
\]
Hence, we deduce from \eqref{extra11} by the three above inequalities that  
\[
 \int_{\pa E} \left(x_1^2 -  \frac{1}{10}x_1^2 - \frac{4}{s^2} |x|^2- 2 \right) \zeta^2\, d \Ha_\gamma^1 \leq \int_{\pa E}\left(  (s+1)^2 \nu_1^2 \zeta^2 +  \frac{C}{s^2} \nu_1^2  \right) \, d \Ha_\gamma^1
\]
We recall that $\zeta = 0$ when $|x_2| \geq \frac{s}{2}$ and that  by \eqref{aux 1} 
we have that $|x|^2 \geq (s-1)^2$ on $\pa E$. In particular, for every $x \in \{ x \in \pa E : |x_2| \leq \frac{s}{2} \}$ it holds
\begin{equation} \label{extra12}
x_1^2 = |x|^2 - x_2^2 \geq (s-1)^2 - \frac{s^2}{4} \geq  \frac{3}{4}(s-2)^2
\end{equation}
and $|x|^2 \leq 2x_1^2$. Therefore we deduce
\begin{equation} \label{extra13}
\frac45  \int_{\pa E} x_1^2 \zeta^2\, d \Ha_\gamma^1 
\leq   (s+1)^2  \int_{\pa E}\nu_1^2 \zeta^2\, d \Ha_\gamma^1 +  \frac{C}{s^2} \int_{\pa E} \nu_1^2  \, d \Ha_\gamma^1.
\end{equation}

We write the first term on the right-hand-side of \eqref{extra13}  as 
\[
\begin{split}
 (s+1)^2&\int_{\pa E \cap \{ \nu_1^2 \leq 1/2 \}}   \nu_1^2 \zeta^2 \, d \Ha_\gamma^1 + (s+1)^2\int_{\pa E \cap \{ \nu_1^2 > 1/2 \}}   \nu_1^2 \zeta^2 \, d \Ha_\gamma^1\\
&\leq \frac{(s+1)^2}{2} \int_{\pa E}   \zeta^2 \, d \Ha_\gamma^1 + (s+1)^2\int_{\pa E \cap \{ \nu_1^2 > 1/2 \}}   \nu_1^2  \, d \Ha_\gamma^1\\
&\leq \frac{3}{4} \int_{\pa E}  x_1^2 \zeta^2 \, d \Ha_\gamma^1 + (s+1)^2\int_{\pa E \cap \{ \nu_1^2 > 1/2 \}}   \nu_1^2  \, d \Ha_\gamma^1,
\end{split}
\]
where the last inequality follows from \eqref{extra12}. Therefore  \eqref{extra13} implies
\[
\frac{1}{20} \int_{\pa E} x_1^2 \zeta^2\, d \Ha_\gamma^1 
\leq    (s+1)^2\int_{\pa E \cap \{ \nu_1^2 > 1/2 \}}   \nu_1^2  \, d \Ha_\gamma^1  +  \frac{C}{s^2}  \int_{\pa E} \nu_1^2  \, d \Ha_\gamma^1.
\]
Now since $|x|^2 \leq 2x_1^2$ and $\zeta(x_2) =1 $ for $|x_2| \leq \tfrac{s}{3}$ we have
\[
\int_{\pa E \cap \{ |x_2| \leq  \tfrac{s}{3} \}}|x|^2 \, d \Ha_\gamma^1 
\leq    C s^2  \int_{\pa E \cap \{ \nu_1^2 > 1/2 \}}   \nu_1^2  \, d \Ha_\gamma^1  +  \frac{C}{s^2}  \int_{\pa E} \nu_1^2  \, d \Ha_\gamma^1.
\]
Hence, we need yet to show that 
\begin{equation}   \label{extra14}
\int_{\pa E \cap \{ \nu_1^2 > 1/2 \}}   \nu_1^2  \, d \Ha_\gamma^1  \leq \frac{C}{s^2}  \int_{\pa E} \nu_1^2  \, d \Ha_\gamma^1
\end{equation}
to finish the proof of \eqref{extra1}. 

We obtain by \eqref{extra 2 ???} and  \eqref{key lemma2}  that 
\[
\begin{split}
\int_{\pa E \cap \{  \nu_1^2 > 1/2  \}} \Big||\nu_1| - \frac{\sqrt{3}}{\sqrt{7}}\Big|^2  \, d \Ha_\gamma^1 &\leq \int_{\pa E \cap \{  \nu_1^2 > 1/2  \}} \big||\nu_1| - | \bar{\nu}_1| \big |^2  \, d \Ha_\gamma^1  \\
&\leq \int_{\pa E} \big||\nu_1| - | \bar{\nu}_1| \big |^2  \, d \Ha_\gamma^1  \leq \frac{C}{s^2}  \int_{\pa E} \nu_1^2  \, d \Ha_\gamma^1.
\end{split}
\]
Thus we have
\[
\Ha_\gamma^1\left(\pa E \cap \{  \nu_1^2 > 1/2  \}\right) \leq C \int_{\pa E \cap \{  \nu_1^2 > 1/2  \}} \Big||\nu_1| 
- \frac{\sqrt{3}}{\sqrt{7}}\Big|^2  \, d \Ha_\gamma^1\leq \frac{C}{s^2}  \int_{\pa E} \nu_1^2  \, d \Ha_\gamma^1.
\]
This proves \eqref{extra14} and concludes the proof of \eqref{extra1}. 
\end{proof}

\bigskip
We are now ready to prove the reduction to the one dimensional case.

\vspace{4pt}
\begin{proof}[\textbf{Proof of Theorem \ref{2 to 1}}]
We recall that 
\[
\Sigma_+ = \{ x \in \pa E : x_2 > 0 \} \qquad \text{and} \qquad \Sigma_- = \{ x \in \pa E : x_2 < 0 \}.
\]

\smallskip
\noindent
As we mentioned in Remark \ref{why not}, using  $\varphi=\nu_e$ with $e \in \Sf^1$ orthogonal to the barycenter as a test function 
in the second variation  inequality \eqref{second var}, does not provide any information on the minimizer since  the term 
$|\int_{\partial E} \nu_e\,x\, d \Ha_\gamma^1|$ can be  too large and thus  \eqref{ineq1} becomes  trivial inequality.
We overcome this problem by essentially variating only $\Sigma_+$ while keeping $\Sigma_-$ unchanged, and vice-versa (see Figure \ref{fig1}). 
To be more precise, we restrict the class of test function by assuming $\varphi \in C^\infty(\partial E)$ to have  zero average and 
to satisfy $\varphi(x) = 0$ for every  
$x \in \pa E \cap \{ x_2 \leq - \tfrac{s}{3} \}$ (or $\varphi(x) = 0$ for  every  $x \in \pa E \cap \{ x_2 \geq \tfrac{s}{3} \}$). 
The point is that for these test function  an estimate similar to  \eqref{ineq2} holds,
\begin{equation}\label{core}
\frac{\e}{\sqrt{2\pi}}\Big| \int_{\pa E} \varphi\,x \, d \Ha_\gamma^1 \Big|^2 \leq\frac{1}{2}\int_{\pa E} \varphi^2 \, d \Ha_\gamma^1.
\end{equation}
Indeed,  by writing
\[
\begin{split}
\Big| \int_{\pa E} \varphi\,x \, d \Ha_\gamma^1 \Big|^2 &= \left(\int_{\pa E} x_1 \varphi \, d \Ha_\gamma^1 \right)^2 
+ \left(\int_{\pa E} x_2 \varphi \, d \Ha_\gamma^1 \right)^2 \\
&= \left(\int_{\pa E} (x_1- (x_1)_{\Sigma_+}) \varphi \, d \Ha_\gamma^1 \right)^2 
+ \left(\int_{\pa E} (x_2- (x_2)_{\Sigma_+}) \varphi \, d \Ha_\gamma^1 \right)^2
\end{split}
\]
and estimating both the terms by  \eqref{extra2} and \eqref{extra1} we have
\[
\begin{split}
\left(\int_{\pa E} (x_i- (x_i)_{\Sigma_+}) \varphi \, d \Ha_\gamma^1 \right)^2 
&= \left(\int_{\Sigma_+ \cup \{ |x_2|\leq \tfrac{s}{3}\}} (x_i- (x_i)_{\Sigma_+}) \varphi \, d \Ha_\gamma^1 \right)^2 \\
&\leq  8 \left(\int_{\Sigma_+ } (x_i- (x_i)_{\Sigma_+})^2 
+ \int_{\pa E \cap \{ |x_2|\leq \tfrac{s}{3}\}} |x|^2 \, d \Ha_\gamma^1 \right)\left(\int_{\pa E} \varphi^2 \, d \Ha_\gamma^1 \right)\\
&\leq C P_\gamma(E) \left(\int_{\pa E} \varphi^2 \, d \Ha_\gamma^1 \right) , \qquad \text{for } i = 1,2.
\end{split}
\]
Hence, we get \eqref{core} thanks to \eqref{bound perimeter up} and $\e \leq\frac{7\sqrt{2 \pi}}{5s^2}  e^{\frac{s^2}{2}}$  
from \eqref{choice epsilon up}.

\begin{figure} \label{pic}
\centering
\psfrag{1}{$x_2$}
\psfrag{2}{$x_1$}
\psfrag{3}{$s/3$}
\psfrag{4}{$-s/3$}
\psfrag{5}{$\Sigma_+$}
\psfrag{6}{$\Sigma_-$}
\includegraphics[width=11cm]{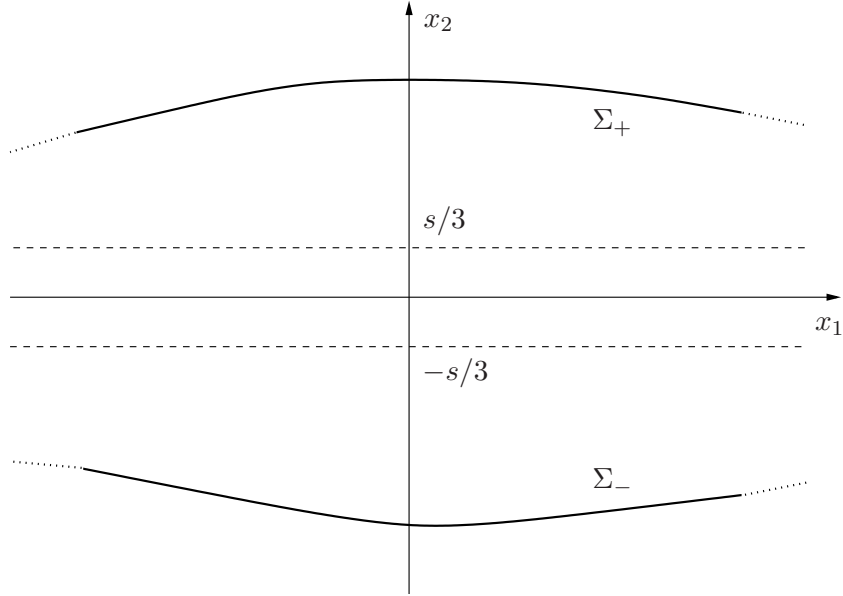}
\caption{The sets $\Sigma_+$ and $\Sigma_-$.}
\label{fig1}
\end{figure}

In order to explain  the idea of the proof, we assume first  that  $\Sigma_+$ and $\Sigma_-$ are  different components of $\partial E$. 
This is of course a major simplification but it will hopefully help the reader to follow the actual proof below.
In this case  we may use the following test functions in the second variation condition, 
\begin{equation} \label{fake testf}
\varphi_i:= \begin{cases}
\nu_i-(\nu_i)_{\Sigma_+}  \;\; &\text{on }\, \Sigma_+\\
0      \;\; &\text{on }\, \Sigma_-
\end{cases}
\end{equation}
for $i=1,2$, where $(\nu_i)_{\Sigma_+}$ is the average of $\nu_i$ on $\Sigma_+$.
We use $\varphi_i$ as a  test functions  in the second variation condition \eqref{second var} and use \eqref{core} to obtain 
\begin{equation*} 
\int_{\partial E} \left(|\nabla_\tau \varphi_i|^2 - k^2\varphi_i^2  
+ \e \langle b , \nu \rangle \varphi_i^2  - \frac12\varphi_i^2 \right)\, d\Ha^{1}_\gamma(x)  \geq 0.
\end{equation*}
By using equalities \eqref{by part1} and \eqref{by part2}, rewritten on $\Sigma_+$,  we get after straightforward calculations
\begin{equation} \label{fake 1}
(\nu_i)_{\Sigma_+}^2\int_{\Sigma_+}  (k^2 - \e \la b, \nu \ra)  \, d \Ha_\gamma^{1} 
+ \frac{1}{2}\int_{\Sigma_+} (\nu_i - (\nu_i)_{\Sigma_+})^2  \, d \Ha_\gamma^{1}  
\leq  -\e \la b, e_i \ra\int_{\Sigma_+} \nu_i \, d \Ha_\gamma^{1}, \quad i=1,2.
\end{equation}
By summing up the previous inequality for $i = 1,2$  we get 
\begin{equation*}
[(\nu_1)_{\Sigma_+}^2+(\nu_2)_{\Sigma_+}^2]\int_{\Sigma_+}  (k^2 - \e \la b, \nu \ra) \, d \Ha_\gamma^{1} 
+ \frac{1}{2}\int_{\Sigma_+} [1 - (\nu_1)_{\Sigma_+}^2 - (\nu_1)_{\Sigma_+}^2] \, d \Ha_\gamma^{1} 
\leq  -\e \int_{\Sigma_+} \la b,\nu \ra   \, d \Ha_\gamma^{1}.
\end{equation*}
This can be rewritten as 
\begin{equation*}
[1-(\nu_1)_{\Sigma_+}^2-(\nu_2)_{\Sigma_+}^2]\int_{\Sigma_+} \Bigl(\e \la b, \nu \ra+\frac{1}{2}\Bigr) \, d \Ha_\gamma^{1} 
+[(\nu_1)_{\Sigma_+}^2+(\nu_2)_{\Sigma_+}^2]\int_{\Sigma_+} k^2 \, d \Ha_\gamma^{1} 
\leq 0.
\end{equation*}
By Jensen inequality $1-(\nu_1)_{\Sigma_+}^2-(\nu_2)_{\Sigma_+}^2\geq0$, while 
$|\e \la b, \nu \ra| \leq \frac{2}{s^2}$  which follows from \eqref{usein}. Therefore $k= 0$ and $\Sigma_+$ is a line.
It is clear that a similar conclusion holds also for in $\Sigma_-$.

\medskip
When $\Sigma_+$ and $\Sigma_-$ are connected the argument is more involved, since we need a cut-off argument in order to ``separate'' 
$\Sigma_+$ and $\Sigma_-$. This is possible due to \eqref{extra1}, which implies that the perimeter of the minimizer 
in the strip $\{ |x_2| \leq s/3 \}$ is small. Therefore the cut-off argument produces an error term, which by \eqref{extra1} is 
small enough so that we may  apply the previous argument.  However, the presence of the cut-off function makes the equations more 
tangled and the estimates more complicated. Since the argument is technically involved we split the rest of the proof in two steps. 

\medskip

\noindent
\textbf{Step 1}. 
In the first step we prove 
\begin{equation} 
\label{thm 3 st 11}
\left((\nu_1)_{\Sigma_+}^2 + (\nu_2)_{\Sigma_+}^2 \right)\int_{\Sigma_+}  k^2  \, d \Ha_\gamma^{1} 
+ \int_{\Sigma_+} \left(1  - (\nu_1)_{\Sigma_+}^2 - (\nu_2)_{\Sigma_+}^2\right)   \, d \Ha_\gamma^{1}  
\leq R,
\end{equation}
where the remeinder term satisfies 
\begin{equation} 
\label{thm 3 R}
R \leq \frac{C}{s^4} \left( \frac{P_\gamma(E)^2}{\Ha_\gamma^1(\Sigma_+)} \right)\bar{\nu}_1^2.
\end{equation}

We do this by proving the counterpart of \eqref{fake 1}, which now reads as 
\begin{equation} 
\label{thm 3 st 12}
(\nu_i)_{\Sigma_+}^2\int_{\Sigma_+}  \left(\frac{k^2}{2} - \e \la b, \nu \ra\right)  \, d \Ha_\gamma^{1} 
+ \frac{1}{2}\int_{\Sigma_+} (\nu_i - (\nu_i)_{\Sigma_+})^2  \, d \Ha_\gamma^{1}  
\leq  -\e \la b, e_i \ra\int_{\Sigma_+} \nu_i \, d \Ha_\gamma^{1} + R,
\end{equation}
for $i=1,2$, where the reminder $R$ satisfies \eqref{thm 3 R}. Let us show first how \eqref{thm 3 st 11} follows from \eqref{thm 3 st 12}. 

Indeed, by $\e |b|\leq \frac{2}{s^2}$ given by \eqref{usein} we have 
\[
-(\nu_i)_{\Sigma_+}^2 \int_{\Sigma_+}  \e \la b, \nu \ra  \, d \Ha_\gamma^{1}  
\geq - \left( \fint_{\Sigma_+} \nu_i^2\, d \Ha_\gamma^{1} \right) \int_{\Sigma_+}  \e \la b, \nu \ra  \, d \Ha_\gamma^{1} 
- \frac{2}{s^2}\int_{\Sigma_+} (\nu_i- (\nu_i)_{\Sigma_+})^2  \, d \Ha_\gamma^{1}. 
\]
Therefore we have
\[
-(\nu_i)_{\Sigma_+}^2 \int_{\Sigma_+}  \e \la b, \nu \ra  \, d \Ha_\gamma^{1} 
+ \frac{1}{4}\int_{\Sigma_+} (\nu_i - (\nu_i)_{\Sigma_+})^2  \, d \Ha_\gamma^{1}    
\geq - \left( \fint_{\Sigma_+} \nu_i^2\, d \Ha_\gamma^{1} \right) \int_{\Sigma_+}  \e \la b, \nu \ra  \, d \Ha_\gamma^{1}.
\]
Thus we obtain from   \eqref{thm 3 st 12} 
\[
\begin{split}
(\nu_i)_{\Sigma_+}^2\int_{\Sigma_+}  \frac{k^2}{2}  \, d \Ha_\gamma^{1} 
-  \left( \fint_{\Sigma_+} \nu_i^2\, d \Ha_\gamma^{1} \right) \int_{\Sigma_+}  \e \la b, \nu \ra  \, d \Ha_\gamma^{1} 
+ &\frac{1}{4}\int_{\Sigma_+} (\nu_i - (\nu_i)_{\Sigma_+})^2  \, d \Ha_\gamma^{1}  \\
&\leq  -\e \la b, e_i \ra\int_{\Sigma_+} \nu_i \, d \Ha_\gamma^{1} + R.
\end{split}
\]
Note that $\sum_{i=1}^2  \int_{\Sigma_+} \la b, e_i \ra\nu_i \, d \Ha_\gamma^{1} = \int_{\Sigma_+} \la b, \nu \ra \, d \Ha_\gamma^{1}$. 
Therefore, by adding the above estimate with $i =1,2$ we obtain 
\[
\begin{split}
\left((\nu_1)_{\Sigma_+}^2 + (\nu_2)_{\Sigma_+}^2 \right)\int_{\Sigma_+}  \frac{k^2}{2}  \, d \Ha_\gamma^{1}  
- \int_{\Sigma_+}  \e \la b, \nu \ra  \, d \Ha_\gamma^1 + \frac{1}{4} &\int_{\Sigma_+} \left(\nu_1^2 
- (\nu_1)_{\Sigma_+}^2 +  \nu_2^2 - (\nu_2)_{\Sigma_+}^2 \right) \, d \Ha_\gamma^1\\
&\leq  -\int_{\Sigma_+}  \e \la b, \nu \ra  \, d \Ha_\gamma^{1} + R,
\end{split}
\]
which implies \eqref{thm 3 st 11}. Hence, we need to prove \eqref{thm 3 st 12}. 

We prove \eqref{thm 3 st 12} by using the second variation condition  \eqref{second var}  with test function 
\begin{equation*}
\varphi_i:=(\nu_i - \alpha_i)\zeta(x_2) 
\end{equation*}
for $i=1,2$.  Here $\zeta : \R \to [0,1]$ is a smooth cut-off function such that 
\begin{equation*}
\zeta(t) = \begin{cases}
1 \;\; &\text{for }\, t \geq 0,\\
0      \;\; &\text{for }\,  t \leq -s/3,
\end{cases} \qquad \text{and} \qquad |\zeta'(t)| \leq \frac{4}{s} \quad \text{for all } \, t \in \R
\end{equation*}
and $\alpha_i$ is chosen so that $\varphi_i$ has zero average. This choice is the counterpart of \eqref{fake testf} in the case 
when $\pa E$ is connected. In particular, the cut-off function $\zeta$ guarantees that $\varphi_i(x) = 0$,  for 
$x \in  \pa E \cap \{ x_2 \leq -\tfrac{s}{3} \}$. Therefore  the estimate \eqref{core} holds and the second variation condition 
\eqref{second var} yields 
\begin{equation} 
\label{thm3 st 14}
\int_{\partial E} \left(|\nabla_\tau \varphi_i|^2 - k^2\varphi_i^2  
+ \e \langle b , \nu \rangle \varphi_i^2  - \frac12\varphi_i^2 \right)\, d\Ha^{1}_\gamma(x)  \geq 0.
\end{equation}

Let us simplify the above expression. Recall that the test function is  $\varphi = (\nu_i - \alpha_i) \zeta$, where $\zeta = \zeta(x_2)$. 
By straightforward calculation
\[
\begin{split}
\int_{\partial E} |\nabla_\tau \varphi_i|^2 \, d \Ha_\gamma^1 &= \int_{\pa E} \Big( \varphi_i (- \Delta_\tau \varphi_i + \la \nabla \varphi_i , x \ra   \Big)\, d \Ha_\gamma^1 \\
&= \int_{\pa E} \Big( \varphi_i \zeta (- \Delta_\tau  \nu_i + \la \nabla_\tau \nu_i, x \ra)  + (\nu_i - \alpha_i)^2|\nabla_\tau \zeta|^2   \Big)\, d \Ha_\gamma^1.
\end{split}
\] 
Therefore we have by the above equality and by multiplying the equation  \eqref{eq nu 2} with $\varphi_i$ and integrating by parts 
\begin{equation} 
\label{thm3 st 15}
\int_{\partial E} |\nabla_\tau \varphi_i|^2 \, d \Ha_\gamma^1 =  \int_{\pa E}   (k^2 - \e \la b , \nu \ra)  \zeta^2 \nu_i (\nu_i - \alpha_i)  \, d \Ha_\gamma^1 + R_1, 
\end{equation}
where the remainder term is
\begin{equation} 
\label{thm3 st 16}
R_1 = \int_{\pa E} \left( \e \la b , e_i \ra \, \varphi_i \zeta +  (\nu_i - \alpha_i)^2|\nabla_\tau \zeta|^2 \right) \, d \Ha_\gamma^1.
\end{equation}
On the other hand, multiplying   \eqref{eq nu 2} with $\zeta^2$ and integrating by parts yields  
\begin{equation} 
\label{thm3 st 17}
\begin{split}
\alpha_i \int_{\pa E} \Big( (k^2 - \e \la b , \nu \ra)  \nu_i  \zeta^2  \Big)\, d \Ha_\gamma^1 &=\alpha_i \int_{\pa E} \Big( (-\Delta_\tau \nu_i + \la \nabla_\tau \nu_i, x \ra  ) \zeta^2   - \e \la b, e_i \ra \zeta^2 \Big)\, d \Ha_\gamma^1\\
&=    - \alpha_i  \int_{\pa E}  \e \la b, e_i \ra \zeta^2 \, d \Ha_\gamma^1 + R_2,
\end{split}
\end{equation}
where the remainder term is
\begin{equation} 
\label{thm3 st 18}
R_2 = 2\alpha_i \int_{\pa E} \zeta  \la \nabla_\tau \nu_i, \nabla_\tau \zeta  \ra \, d \Ha_\gamma^1.
\end{equation}
Collecting \eqref{thm3 st 14},  \eqref{thm3 st 15}, \eqref{thm3 st 17}  yields
\begin{equation} 
\label{thm3 st 19}
 \int_{\pa E} \left( \alpha_i^2 (k^2 - \e \la b , \nu \ra)    + \frac{1}{2}|\nu_i - \alpha_i|^2 \right)\zeta^2  \, d \Ha_\gamma^1 
 \leq -\alpha_i  \int_{\pa E} \e \la b , e_i \ra \zeta^2 \, d \Ha_\gamma^1 + R_1 + R_2,
\end{equation}
where the remainder terms $R_1$ and  $R_2$ are given by \eqref{thm3 st 16} and \eqref{thm3 st 18} respectively.

Let us next estimate the remainder terms in \eqref{thm3 st 19}. We note that   \eqref{key lemma2} (recall that $\nu_v =\nu_1$) 
implies $\fint_{\pa E} \nu_1^2 \, d \Ha_\gamma^1 \leq \left(1 + \tfrac{10}{s^2} \right) \bar{\nu}_1^2$. Therefore we deduce  
from  \eqref{aux 1} and  \eqref{extra1}   that 
\begin{equation}
\label{from extra}
\Ha_\gamma^1\big(\{ x \in \pa E : |x_2|\leq s/3\}\big) \leq \frac{C}{s^2} P_\gamma(E) \, \bar{\nu}_1^2.
\end{equation}
Therefore since $|\nabla_\tau \zeta(x)| \leq 4/s$, for $|x_2| \leq s/3$, and $\nabla_\tau \zeta(x) = 0$  otherwise, \eqref{from extra} yields
\begin{equation}
\label{zeta small}
\int_{\pa E} |\nabla \zeta|^2\,  d \Ha_\gamma^1 \leq \frac{C}{s^4} P_\gamma(E) \, \bar{\nu}_1^2.
\end{equation}
We may therefore estimate $R_2$ (given by \eqref{thm3 st 18}) by Young's inequality and  by \eqref{zeta small} as
\[
\begin{split}
R_2 &\leq   \frac{\alpha_i^2}{2} \int_{\pa E} |\nabla_\tau \nu_i|^2 \zeta^2  \, d \Ha_\gamma^1 
+  2\int_{\pa E} |\nabla_\tau  \zeta|^2  \, d \Ha_\gamma^1 \\
&\leq   \frac{\alpha_i^2}{2} \int_{\pa E} k^2 \zeta^2  \, d \Ha_\gamma^1 +  \frac{C}{s^4}P_\gamma(E) \, \bar{\nu}_1^2.
\end{split}
\]
Similarly we may estimate  \eqref{thm3 st 16} as
\[
R_1 \leq  \e \la b , e_i \ra  \int_{\pa E} \varphi_i \zeta \, d \Ha_\gamma^1  +   \frac{C}{s^4}P_\gamma(E) \, \bar{\nu}_1^2.
\]
To estimate the first term in $R_1$ we recall that $\int_{\pa E} \varphi_i  \, d \Ha_\gamma^1=0$ and therefore  
$\int_{\pa E} \varphi_i \zeta \, d \Ha_\gamma^1 = \int_{\pa E} \varphi_i (\zeta-1) \, d \Ha_\gamma^1$. 
Since  $\varphi_i (\zeta-1)  =0$ on  $\pa E \cap \{ |x_2|> s/3\}$, we deduce by $\e|b|\leq \frac{2}{s^2}$ and by \eqref{from extra} that
\[
\e \la b , e_i \ra  \int_{\pa E} \varphi_i \zeta \, d \Ha_\gamma^1  \leq   \frac{C}{s^4}P_\gamma(E) \, \bar{\nu}_1^2.
\]
Hence, we may write \eqref{thm3 st 19} as
\begin{equation} 
\label{thm3 st 1 10}
\int_{\pa E} \left( \alpha_i^2 \left(\frac{k^2}{2} - \e \la b , \nu \ra\right)    
+ \frac{1}{2}|\nu_i - \alpha_i|^2 \right)\zeta^2  \, d \Ha_\gamma^1 \leq 
-\alpha_i  \int_{\pa E} \e \la b , e_i \ra \zeta^2 \, d \Ha_\gamma^1 + \tilde{R},
\end{equation}
where the remainder term $\tilde{R}$ satisfies
\begin{equation} 
\label{thm3 st 1 11}
\tilde{R} \leq   \frac{C}{s^4}P_\gamma(E) \, \bar{\nu}_1^2.
\end{equation}

By a similar argument we may also get rid of the cut-off function in \eqref{thm3 st 1 10}. 
Indeed by  $\e|b|\leq 2/s^2$ and \eqref{from extra} we have 
$-\int_{\pa E} \e \la b , \nu \ra \zeta^2   \, d \Ha_\gamma^1 \geq -\int_{\Sigma_+} \e \la b , \nu \ra  \, d \Ha_\gamma^1 - \tilde{R}$, 
where $\tilde{R}$ satisfies \eqref{thm3 st 1 11}. 
Similarly we get $-\alpha_i  \int_{\pa E} \e \la b , e_i \ra \zeta^2 \, d \Ha_\gamma^1 \leq 
-\alpha_i  \int_{\Sigma_+} \e \la b , e_i \ra \, d \Ha_\gamma^1 +  \tilde{R}$. 
Therefore we obtain from \eqref{thm3 st 1 10}
\begin{equation} 
\label{thm3 st 1 12}
 \int_{\Sigma_+} \left( \alpha_i^2 \left(\frac{k^2}{2} - \e \la b , \nu \ra\right)    
 + \frac{1}{2}|\nu_i - \alpha_i|^2 \right) \, d \Ha_\gamma^1 \leq -\alpha_i  \int_{\Sigma_+} \e \la b , e_i \ra  \, d \Ha_\gamma^1 + \tilde{R},
\end{equation}
where the remainder term $\tilde{R}$ satisfies \eqref{thm3 st 1 11}.

We need yet to replace $\alpha_i$ by $(\nu_i)_{\Sigma_+}$ in order to obtain \eqref{thm 3 st 12}. We  do this by showing that
$\alpha_i$ is close the average $(\nu_i)_{\Sigma_+}$. To be more precise we show that 
\begin{equation}
\label{average}
|\alpha_i - (\nu_i)_{\Sigma_+}|   \leq  \frac{C}{s^2}\left(\frac{P_\gamma(E)}{\Ha_\gamma^1(\Sigma_+)}\right) \, \bar{\nu}_1^2.
\end{equation}
Indeed, since $\zeta =1$ on $\Sigma_+$ we may write
\[
\Ha_\gamma^1(\Sigma_+)(\alpha_i - (\nu_i)_{\Sigma_+})= \int_{\Sigma_+}(\alpha_i - (\nu_i)_{\Sigma_+})\zeta \,  d \Ha_\gamma^1. 
\]
Since $\zeta = 0 $ when $x_2 \leq -s/3$ we may estimate
\[
\Ha_\gamma^1(\Sigma_+) \big|\alpha_i - (\nu_i)_{\Sigma_+} \big| \leq \Big| \int_{\pa E}(\alpha_i - (\nu_i)_{\Sigma_+})\zeta \,  d \Ha_\gamma^1 \Big| 
+ 2 \Ha_\gamma^1\big(\{ x \in \pa E : |x_2|\leq s/3 \}\big).
\]
The inequality \eqref{average} then  follows from  $\int_{\pa E}(\alpha_i - (\nu_i)_{\Sigma_+})\zeta \,  d \Ha_\gamma^1 = - \int_{\pa E} \varphi_i \,  d \Ha_\gamma^1 = 0$ and from \eqref{from extra}.

We use \eqref{key lemma1} and $\e|b|\leq \frac{2}{s^2}$ to conclude that $\int_{\Sigma_+} k^2 + |\e \la b , \nu \ra|\,  d \Ha_\gamma^1  
\leq \frac{C}{s^2} P_\gamma(E)$. Therefore we may estimate \eqref{thm3 st 1 12} 
by \eqref{average} and get
\[
 \int_{\Sigma_+} \left( (\nu_i)_{\Sigma_+}^2 \left(\frac{k^2}{2} - \e \la b , \nu \ra\right)    
 + \frac{1}{2}|\nu_i - \alpha_i|^2 \right) \, d \Ha_\gamma^1 \leq -(\nu_i)_{\Sigma_+} \int_{\Sigma_+} \e \la b , e_i \ra  \, d \Ha_\gamma^1 + R,
\]
where the remainder term $R$ satisfies \eqref{thm 3 R}. Finally the inequality  \eqref{thm 3 st 12} follows from 
\[
 \int_{\Sigma_+} |\nu_i - (\nu_i)_{\Sigma_+}|^2 \, d \Ha_\gamma^1 \leq \int_{\Sigma_+} |\nu_i - \alpha_i|^2 \, d \Ha_\gamma^1.
\]

\bigskip

\noindent
\textbf{Step 2}.
Precisely similar argument as in the previous step, gives the estimate  \eqref{thm3 st 1 11} also for $\Sigma_-$, i.e., 
\begin{equation} 
\label{thm 3 st 2 1}
\left((\nu_1)_{\Sigma_-}^2 + (\nu_2)_{\Sigma_-}^2 \right)\int_{\Sigma_-}  k^2  \, d \Ha_\gamma^{1} 
+ \int_{\Sigma_-} \left(1  - (\nu_1)_{\Sigma_-}^2 - (\nu_2)_{\Sigma_+}^2\right)   \, d \Ha_\gamma^{1}  
\leq \tilde{R},
\end{equation}
where the remainder satisfies
\begin{equation} 
\label{thm 3 tildeR}
\tilde{R} \leq \frac{C}{s^4} \left( \frac{P_\gamma(E)^2}{\Ha_\gamma^1(\Sigma_-)} \right)\bar{\nu}_1^2.
\end{equation}

Let us next prove that 
\begin{equation} 
\label{thm 3 st 2 2}
\Ha_\gamma^1(\Sigma_+) \geq \frac{1}{10} P_\gamma(E) \qquad \text{and} \qquad  \Ha_\gamma^1(\Sigma_-) \geq \frac{1}{10} P_\gamma(E).
\end{equation}
Without loss of generality we may assume that $\Ha_\gamma^1(\Sigma_-) \geq \Ha_\gamma^1(\Sigma_+)$. 
In particular, we have $\Ha_\gamma^1(\Sigma_-) \geq \frac12P_\gamma(E)$ 
and therefore \eqref{thm 3 st 2 1} and \eqref{thm 3 tildeR} imply 
\begin{equation}
\label{thm 3 st 2 3}
\int_{\Sigma_-} (\nu_2 - (\nu_2)_{\Sigma_-})^2  \, d \Ha_\gamma^{1}   \leq \frac{C}{s^4}P_\gamma(E).
\end{equation}
We need to show  the first inequality in \eqref{thm 3 st 2 2}. We use \eqref{nu 2 big} and \eqref{thm 3 st 2 3} to deduce
\[
\begin{split}
\frac47P_\gamma(E) &\leq \int_{\pa E} (\nu_2 - \bar{\nu}_2)^2\, d \Ha_\gamma^{1} \leq  \int_{\pa E} (\nu_2 - (\nu_2)_{\Sigma_-})^2\, d \Ha_\gamma^{1} \\
&=\int_{\Sigma_-} (\nu_2 - (\nu_2)_{\Sigma_-})^2\, d \Ha_\gamma^{1} + \int_{\Sigma_+} (\nu_2 - (\nu_2)_{\Sigma_-})^2\, d \Ha_\gamma^{1} \\
&\leq \frac{C}{s^4}P_\gamma(E)  + 4 \Ha_\gamma^{1}(\Sigma_+).
\end{split}
\]
Hence we obtain $\Ha_\gamma^1(\Sigma_+) \geq \frac{1}{10}P_\gamma(E)$. Thus we have \eqref{thm 3 st 2 2}. 

We conclude  from \eqref{thm 3 st 2 2} and from \eqref{key lemma1} that $\fint_{\Sigma_+} k^2 \, d \Ha_\gamma^{1} \leq \frac{C}{s^2}$.
Therefore we have by \eqref{thm 3 st 11} and \eqref{thm 3 st 2 2} that 
\[
\int_{\Sigma_+} k^2 \, d \Ha_\gamma^{1} \leq \frac{C}{s^4}\, P_\gamma(E)\, \bar{\nu}_1^2.
\]
Similarly we get 
\[
\int_{\Sigma_-} k^2 \, d \Ha_\gamma^{1} \leq \frac{C}{s^4}\, P_\gamma(E)\, \bar{\nu}_1^2
\]
and therefore 
\begin{equation}
\label{thm 3 st 2 4}
\fint_{\pa E} k^2 \, d \Ha_\gamma^{1} \leq \frac{C}{s^4}\, \bar{\nu}_1^2.
\end{equation}

We are now close to finish the proof. We proceed by recalling the equation \eqref{eq nu 2}  for $\nu_1$, i.e., 
\[
\Delta \nu_1 - \la \nabla \nu_1, x\ra = -k^2\nu_1  + \e  \la b, \nu\ra \nu_1 -\e \la b, e^{(1)}\ra.
\]
We integrate this over $\pa E$, use \eqref{thm 3 st 2 4} and get
\[
- \e  \la b,  e^{(1)}\ra    + \e  \fint_{\pa E}  \la b, \nu\ra \nu_1\, d \Ha_\gamma^1   
\leq  \fint_{\pa E} k^2 \, d \Ha_\gamma^1 \leq \frac{C}{s^4} \bar{\nu}_1^2.
\]
Note that by $\bar{\nu} P_\gamma(E) =  - \sqrt{2 \pi} \, b$  (proved in \eqref{usein 2}) we have 
$|\bar{\nu}|\,  \la b,  e^{(1)}\ra = - |b| \bar{\nu}_1$.  Thus we deduce from the above inequality that
\begin{equation}
\label{thm 3 st 2 5}
\e |b| \,|\bar{\nu}_1| \leq  \e |b| \, |\bar{\nu}|\fint_{\pa E}  |\nu_1| \, d \Ha_\gamma^1+ \frac{C}{s^4} \bar{\nu}_1^2\, |\bar{\nu}|.
\end{equation}
We proceed by concluding from \eqref{nu 2 big} that 
\[
\begin{split}
\frac47 &\leq \fint_{\pa E} (\nu_2 - \bar{\nu}_2)^2\, d \Ha_\gamma^{1} \leq  \fint_{\pa E} \big((\nu_1 - \bar{\nu}_1)^2 + (\nu_2 - \bar{\nu}_2)^2\big) \, d \Ha_\gamma^{1}   = 1 - \bar{\nu}_1^2- \bar{\nu}_2^2.
\end{split}
\]
This implies 
\[
|\bar{\nu}|^2 = \bar{\nu}_1^2 + \bar{\nu}_2^2 \leq \frac37.
\]
Using this and the inequality  $\fint_{\pa E} \nu_1^2 \, d \Ha_\gamma^1 \leq \left(1 + \tfrac{10}{s^2} \right) \bar{\nu}_1^2$ 
(given by \eqref{key lemma2})  we  estimate 
\[
\begin{split}
\e |b| \, |\bar{\nu}|\, \fint_{\pa E}  |\nu_1| \, d \Ha_\gamma^1 \leq \frac{\sqrt{3}}{\sqrt{7}}\e |b| \, \left(\fint_{\pa E} \nu_1^2 \, d \Ha_\gamma^1 \right)^{1/2} \leq \frac{3}{4}\e |b| \,|\bar{\nu}_1| .
\end{split}
\]
Therefore we deduce from  \eqref{thm 3 st 2 5} 
\[
\frac14 \e |b| |\bar{\nu}_1| \leq \frac{C}{s^4} \bar{\nu}_1^2\, |\bar{\nu}|.
\]
We use   $ \e |b| \geq \frac{1}{4s^2} |\bar{\nu}|$ (from \eqref{usein})  to conclude 
\[
\frac{1}{s^2}|\bar{\nu}_1| |\bar{\nu}| \leq   \frac{C}{s^4} \bar{\nu}_1^2 \, |\bar{\nu}|. 
\]
This yields $\bar{\nu}_1 = 0$. But then \eqref{key lemma2} implies
\[
\nu_1 = 0 
\]
and we have reduced the problem to the one dimensional case.  
\end{proof}

%%%%%%%%%%%%%%%%%%%%%%%%%%%%%%%%%%%%%%%%%%%%%%%%%%%%%%%%%%%%%%%%%%%%%%%%%%%%%%%%%%%%%%%%%%%%%%%%%%%%%%%%%%%%%%%%%%%%%%%%%%%%%%%%%%%%%%%%%
\section{The one dimensional case}\label{section 1d}

\noindent
In this short section we finish the proof of the main theorem which states that the minimizer of \eqref{functional} 
is either the half-space $H_{\unit, s}$ or the symmetric strip $D_{\unit, s}$. By the previous results it is enough to 
solve the problem in the one-dimensional case. 

\begin{theorem}
\label{thm 1D} When $s$ is large enough the minimizer $E \subset \R$ of \eqref{functional} is either $(-\infty, s)$, 
$(-s,\infty)$ or $(-a(s), a(s))$.
\end{theorem}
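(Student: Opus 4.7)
The plan is to exploit the one-dimensional simplification of the Euler equation to bound the number of boundary points, and then do a finite case analysis. Since $\Hh \equiv 0$ in $\R$, equation \eqref{euler} reduces to $\lambda = x(\varepsilon b - \nu)$ at each $x \in \partial^* E$, where $\nu \in \{-1, +1\}$ is the outer normal. Thus every ``right endpoint'' of a component of $E$ (where $\nu = +1$) must equal $x_+ := \lambda/(\varepsilon b - 1)$ and every ``left endpoint'' (where $\nu = -1$) must equal $x_- := \lambda/(\varepsilon b + 1)$. Since the connected components of $E$ are disjoint open intervals, each contributing at most one endpoint of each type, $\#\partial^* E \leq 2$. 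The possibilities for $E$ are therefore $\emptyset$ or $\R$; a half-space; a bounded interval $(x_-, x_+)$; or a co-bounded set $(-\infty, x_+) \cup (x_-, \infty)$ with $x_+ < x_-$.

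I would dispose of the easy cases first. The trivial sets $\emptyset, \R$ are excluded by direct energy comparison using \eqref{estimate1}, \eqref{estimate PD}, and $\Lambda = s + 1$; in particular $\mathcal{F}(\R) \sim (1 + 1/s)e^{-s^2/2} > \mathcal{F}(D_{\omega, s})$. For a half-space $(-\infty, x_0)$ I analyze $\mathcal{F}(x_0)$ as a piecewise smooth function: at the corner $x_0 = s$ the left and right one-sided derivatives have opposite signs (driven by the volume penalty), so $x_0 = s$ is a strict local minimum, and the other critical point (near $x_0 = s + 1$, forced by $|\lambda| \leq s + 1$) together with the $x_0 \to \pm\infty$ limits give strictly larger energy, yielding $H_{\pm\omega, s}$ as the only half-space candidates. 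For the co-bounded case, the two Euler equations combine to give
\begin{equation*}
c_1 + c_2 = \frac{-2 \lambda \varepsilon b}{1 - \varepsilon^2 b^2},
\end{equation*}
so that $|c_1 + c_2| \leq C/s$ (using $\varepsilon |b| \leq 2/s^2$ from \eqref{usein} and $|\lambda| \leq s + 1$), while \eqref{bound perimeter up} forces $|c_i| \geq s - O(1/s^3)$. A short sign analysis then rules out each sub-case: $c_1, c_2$ both positive or both negative immediately violate the bound on $|c_1 + c_2|$, and the straddling case $c_1 < 0 < c_2$ would force $E^c \supset [-s + O(1/s^3), s - O(1/s^3)]$ with $\gamma(E^c)$ close to $1$, incompatible with $\gamma(E^c) = 1 - \phi(s)$.

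The most delicate case is the bounded interval $E = (p - q, p + q)$. I would parametrize along the constraint curve $V(p, q) := \phi(p + q) - \phi(p - q) = \phi(s)$, which is symmetric under $p \mapsto -p$, passes through the symmetric strip at $(p, q) = (0, a(s))$, and in the two infinite limits $p \to \pm \infty$ degenerates to the half-spaces $H_{\mp \omega, s}$ (already in the one-point case). Implicit differentiation gives $q(p) = a(s) + \tfrac{a(s)}{2} p^2 + O(p^4)$, and a direct Taylor expansion yields
\begin{equation*}
\mathcal{F}(p, q(p)) = 2 e^{-a(s)^2/2} + e^{-a(s)^2/2}\Bigl(\frac{2 \varepsilon a(s)^2 e^{-a(s)^2/2}}{\sqrt{2\pi}} - 1\Bigr) p^2 + O(p^4).
\end{equation*}
Using $a(s)^2 e^{-a(s)^2/2} = \tfrac12 s^2 e^{-s^2/2}(1 + o(1))$ from \eqref{estimate2} and $\varepsilon \geq \tfrac{6\sqrt{2\pi}}{5 s^2} e^{s^2/2}$ from \eqref{choice epsilon up}, the coefficient of $p^2$ is at least $\tfrac15 e^{-a(s)^2/2}(1 + o(1)) > 0$, so the strip is a strict local minimum along the constraint curve. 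Any other critical point on the curve solves $p = \varepsilon b q$ with $p \neq 0$; a fourth-order expansion together with the $p \mapsto -p$ symmetry shows these asymmetric critical points bifurcate as local maxima with $\mathcal{F}(p_*, q(p_*)) > \mathcal{F}(D_{\omega, s})$. Combined with the endpoint limits $\mathcal{F} \to \mathcal{F}(H_{\mp\omega, s})$, the infimum over bounded intervals equals $\min\{\mathcal{F}(D_{\omega, s}), \mathcal{F}(H_{\pm\omega, s})\}$, attained only at the strip (the half-space limits belong to the one-point case already handled).

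The main obstacle is precisely this global analysis of the bounded-interval case: while the local second variation at the strip is a clean computation once \eqref{choice epsilon up} is inserted, certifying the strip as the only competing interval minimizer requires tracking the energy profile along the whole constraint curve, relying on the $p \mapsto -p$ symmetry, the known half-space limits at $\pm \infty$, and a fourth-order local analysis ruling out asymmetric local minima.
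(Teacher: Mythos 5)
Your structural reduction is sound and in places slicker than the paper's: the one-dimensional Euler equation $x(\e b-\nu)=\lambda$ does pin every boundary point with $\nu=+1$ at one value and every point with $\nu=-1$ at another, so $\#\pa E\le 2$, and your exclusion of $\emptyset$, $\R$, the co-bounded configuration (via $|x_++x_-|\le C/s$ against $|x_\pm|\ge s-O(1/s^3)$, plus the volume penalty in the straddling case) and the coarse half-space analysis are all workable, modulo the small imprecision that you invoke $\gamma(E^c)=1-\phi(s)$ before having established the volume identity (the penalty term $\Lambda\sqrt{2\pi}|\gamma(E)-\phi(s)|$ fixes this). Your second-order expansion along the constraint curve, giving the coefficient $\tfrac{2\e a^2e^{-a^2/2}}{\sqrt{2\pi}}-1\ge\tfrac15+o(1)$, is correct and reproduces the paper's computation that the strip is a strict local minimum (the paper's $g(a(s))\ge -2+2\e_0-C/s^2>0$).

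The genuine gap is exactly the step you flag as the main obstacle: ruling out all other local minima among bounded intervals. A fourth-order Taylor expansion at $p=0$ combined with the $p\mapsto-p$ symmetry cannot do this. First, asymmetric critical points need not lie in the radius where the expansion controls anything; you never localize $p$ in the bounded-interval case (the Euler identity $x_++x_-=-2\lambda\e b/(1-\e^2b^2)$, which you use only for the co-bounded case, together with the perimeter bounds would give $|p|\le C/s$ and the endpoint in $[a(s),s+O(1/s)]$, the analogue of \eqref{interval}). Second, even on that small window the expansion is inconclusive: the quadratic term is of size $e^{-a^2/2}p^2$ while the quartic coefficient carries factors of $a^2\sim s^2$, so for $|p|\sim 1/s$ the two contributions are comparable and an unquantified $O(p^4)$ remainder decides nothing. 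Third, the claimed mechanism is not even the right one: near the strip the same positivity that makes $p=0$ a local minimum would make any nearby asymmetric critical point a would-be local \emph{minimum}, not a maximum; the paper excludes such points by a Rolle-type argument, not by classifying them as maxima. What the paper actually does after localizing via \eqref{interval} and proving $\gamma(E)=\phi(s)$ is a global second-derivative analysis on the whole admissible window: it introduces $g(t)$ governing the sign of $f''$ at critical points, proves $g(a(s))>0$ and that $g$ is strictly decreasing via the substitution $t=s+\tfrac{\ln z}{s}$ and the asymptotics of $\alpha,\alpha',\alpha''$ — and this is precisely where the lower bound $\e_0\ge\tfrac65$ in \eqref{choice epsilon up} enters ($\tfrac{2}{z-1}-\tfrac{4\e_0}{z}<0$ for $z\ge 2-\delta$). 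Some quantitative argument of this kind over the full window is indispensable; without it your bounded-interval case, and hence the theorem, is not proved.
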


\begin{proof}
As we explained in Section \ref{notation}, we have to prove that, when $\e$ is in the interval \eqref{choice epsilon up},
the only local  minimizers of \eqref{functional} are $(-\infty, s)$, $(-s,\infty)$ and $(-a(s), a(s))$. 
% which satisfy the perimeter bounds \eqref{bound perimeter up} and \eqref{bound perimeter low} 

Let us first show that the minimizer $E$ is an interval.  Recall that since $E \subset \R$ is a set of locally finite perimeter 
it has locally finite number of boundary points. Moreover, since there is no curvature in dimension one the Euler equation \eqref{euler} 
reads as
\begin{equation} \label{pikku euler}
- x \nu(x) + \e b x = \lambda. 
\end{equation}
By \eqref{aux 1} we have that $(-s+1, s-1) \subset E$.  
It is therefore enough to prove that the boundary $\pa E$ has at most one positive and one negative point.  
Assume by contradiction  that $\pa E$ has at least two positive points (the case of two negative points is similar). 

If $x$ is a positive point which is closest to the origin on $\pa E$ then $\nu(x) = 1$. On the other hand, if $y$ is 
the next boundary point, then $\nu(y) =-1$. Then the Euler equation yields
\[
-x + \e b x = y + \e b y.
\] 
 By $\e |b| \leq \frac{2}{s^2}$  (proved in \eqref{usein}) we conclude that 
\[
\left( 1 - \frac{2}{s^2}\right) y \leq  - \left(1  - \frac{2}{s^2}\right) x,
\] 
which is a contradiction when since $x,y >0$.

The minimizer of  \eqref{functional}  is thus an interval of the form
\[
E = (-x, y),
\]
where $s-1 \leq x, y\leq \infty$. Without loss of generality we may assume that $x \leq y$. 
Therefore we have
\begin{equation*}
e^{-\frac{x^2}{2}}\leq P_\gamma(E)\leq 2e^{-\frac{x^2}{2}}.
\end{equation*}
Using the bounds on the perimeter \eqref{bound perimeter up} and \eqref{bound perimeter low}  we conclude 
that $s-1/s \leq x \leq  s + 3/s$.  
The Euler equation \eqref{pikku euler} yields
\[
x + \e b x = y - \e b y.
\]
Hence, we conclude from $\e |b| \leq \frac{2}{s^2}$  that 
\begin{equation} \label{interval}
 s - \frac{1}{s} \leq x \leq y \leq s + \frac{8}{s}.
\end{equation}

Let us next prove that the minimizer has the volume  $\gamma(E) = \phi(s)$. Indeed, it is not possible that $\gamma(E) < \phi(s)$, 
because by enlarging $E$ we can decrease its perimeter, barycenter and the volume penalization term in \eqref{functional}. 
Also $\gamma(E) >\phi(s)$ is not possible. Indeed, in this case we can perturb the set $E$ by 
\[
E_t =  (-x +t, y), \qquad t >0.
\]  
Then $\phi(s)\leq\gamma(E_t) < \gamma(E)$ and 
\begin{equation*}\begin{split}
\frac{d}{dt} \mathcal{F}(E_t) \bigl|_{t=0} &= x e^{-\frac{x^2}{2}} + \e \, b(E)\,  x e^{-\frac{x^2}{2}} - (s +  1) e^{-\frac{x^2}{2}}\\
&\leq \left(1 +  \frac{2}{s^2}\right) x e^{-\frac{x^2}{2}}  - (s +  1) e^{-\frac{x^2}{2}},
\end{split}\end{equation*}
taking again into account that $\e |b(E)| \leq \frac{2}{s^2}$. But since $x \leq  s + 3/s$ the above  inequality yields 
$\frac{d}{dt} \mathcal{F}(E_t) \bigl|_{t=0} <0$, which contradicts the minimality of $E$. 

Let us finally show that if a local minimizer is a finite  interval  $E = (-x, y)$ for $x \leq y <\infty$, then necessarily $x = y = a(s)$. 
We study the value of the functional \eqref{functional} for intervals  
$E_t = (-\alpha(t), t)$, which have the volume $\gamma(E_t) = \phi(s)$. 
By the inequality \eqref{interval} we need to only study  the case when $a(s) \leq t \leq s + \tfrac{8}{s}$. 
This leads us to study the  function $f : [a(s), s  + \tfrac{8}{s}] \to \R$,
\[
f(t) := \mathcal{F}(E_t) = e^{-\frac{t^2}{2}} + e^{-\frac{\alpha^2(t)}{2}} 
+ \frac{\e}{2\sqrt{2\pi}} \left( e^{-\frac{\alpha^2(t)}{2}}    - e^{-\frac{t^2}{2}} \right)^2 .
\] 
The volume constraint reads as $\int_{-\alpha(t)}^t  e^{-\frac{l^2}{2}} \, dl = \sqrt{2 \pi} \, \phi(s)$. By differentiating this we obtain 
\begin{equation} \label{volume condition}
\alpha'(t) e^{-\frac{\alpha^2(t)}{2}} = -e^{-\frac{t^2}{2}} .
\end{equation}
From \eqref{volume condition} we conclude that for $t \geq \alpha(t)$ it holds $0 > \alpha'(t)  > -1$.

By differentiating $f$ once and by using \eqref{volume condition} we get
\[
f'(t) = \left( -t+\alpha(t) + \frac{\e}{\sqrt{2\pi}}( t + \alpha(t) )  
\left( e^{-\frac{\alpha^2(t)}{2}} - e^{-\frac{t^2}{2}} \right)  \right) e^{-\frac{t^2}{2}}.
\]
Therefore at a critical point it holds
\begin{equation} \label{critical 1D}
\frac{\e}{\sqrt{2\pi}}(t + \alpha(t) )  \left( e^{-\frac{\alpha^2(t)}{2}}    - e^{-\frac{t^2}{2}} \right) = t-\alpha(t).
\end{equation}
We are interested in the sign of $f''(t)$ at critical points on the interval $t \in [a(s), s  + \tfrac{8}{s}]$. 
Let us denote the barycenter of $E_t$ by 
\[
b_t := b(E_t) = \frac{1}{\sqrt{2 \pi}}  \left( e^{-\frac{\alpha^2(t)}{2}}    - e^{-\frac{t^2}{2}} \right). 
\]
By  differentiating $f$ twice and by using \eqref{volume condition} and \eqref{critical 1D} we obtain
\[
f''(t) = \left( -(1 - \e b_t) +\alpha'(t) (1 + \e b_t)  + \frac{\e}{\sqrt{2\pi}}( t + \alpha(t) )^2 e^{-\frac{t^2}{2}}   \right)e^{-\frac{t^2}{2}}
\]
at a critical point $t$. Let us write  $\e = \frac{\e_0 \sqrt{2 \pi}}{s^2} e^{\frac{s^2}{2}}$, where $\tfrac65 \leq \e_0 \leq \tfrac75$. 
In order to analyze the sign of $f''(t)$ at critical points we define $g : [a(s), s  + \tfrac{8}{s}] \to \R$ as
\[
g(t) :=  -(1 - \e b_t) +\alpha'(t) (1 + \e b_t)  + \frac{\e_0}{s^2}( t + \alpha(t) )^2 e^{-\frac{t^2}{2}}e^{\frac{s^2}{2}}.
\]
By recalling that by \eqref{interval} $\alpha(t) \geq s - 1/s$, we have 
$\e |b_t| \leq  \frac{\e}{\sqrt{2 \pi}}  e^{-\frac{\alpha^2(t)}{2}}  \leq \frac{4}{s^2}$.

Note that the end point $t =  \alpha(t) = a(s)$ is of course a critical point of $f$. Let us check that it is  a local minimum.  
We have for the barycenter $b_{a(s)}= 0$,  $\alpha'(a(s)) = -1$ by \eqref{volume condition}, $a(s) = s + \tfrac{\ln 2}{s} + o(\tfrac{1}{s})$ 
by \eqref{estimate2} and $ e^{-\frac{a(s)^2}{2}} = \frac{1}{2}\Bigl( 1 + \frac{\ln 2}{s^2} +  o(1/s^2) \Bigr) e^{-\frac{s^2}{2}}$ 
by \eqref{estimate PD}. Therefore  it holds 
\[
g(a(s)) \geq -2  + 2 \e_0 -\frac{C}{s^2} >0
\]
when $s$ is large. In particular, we deduce that $t = a(s)$ is a local minimum of $f$. 

Let us next show that $g$ is strictly decreasing. Let us  first fix a small number $\delta >0$, which value will be clear later.   
We obtain by differentiating \eqref{volume condition} that 
\[
\alpha'' =  \alpha'(\alpha\alpha' -t).
\]
By recalling that $|\alpha'(t)| \leq 1$ and that by \eqref{interval} $\alpha(t) \leq s + 8/s$,
we get that $|\alpha''(t)| \leq 2s \, |\alpha'(t)| + 16/s$ for $t \in [a(s), s  + \tfrac{8}{s}]$.
Moreover, we estimate 
$\big|\e \, b_t' \big| \leq C/s$, where $b_t'= \frac{d}{dt} b_t $. 
We may then estimate the derivative of $g$ as
\begin{equation} \label{deri g}
\begin{split}
g'(t) \leq&  \alpha''(t) (1 + \e b_t) + (1 +\alpha'(t)) \,\e \, b_t'   \\
&- \frac{\e_0}{s^2} \, t( t + \alpha(t) )^2 e^{-\frac{t^2}{2}}e^{\frac{s^2}{2}} 
+ \frac{2 \e_0}{s^2} (t + \alpha(t))(1+ \alpha'(t))e^{-\frac{t^2}{2}}e^{\frac{s^2}{2}} \\
\leq& 2s \, |\alpha'(t)|- 4 \e_0 \, s \,  e^{-\frac{t^2}{2}}e^{\frac{s^2}{2}}  + \delta
\end{split}
\end{equation} 
when  $t \in [a(s), s  + \tfrac{8}{s}]$ and $\alpha \in [s  - \tfrac{1}{s}, a(s)]$. To study \eqref{deri g}  it is convenient to  write 
\[
t = s + \frac{\ln z}{s}
\]
where $2-\delta \leq z \leq e^8$. We obtain from  the volume condition 
$\int_{-\alpha(t)}^t  e^{-\frac{l^2}{2}} \, dl = \sqrt{2 \pi} \, \phi(s)$ arguing similarly as in \eqref{estimate2} we obtain
\[
\alpha(t) =  s + \frac{1}{s} \ln \left( \frac{z}{z-1} \right)  + \frac{\varepsilon(z)}{s}
\]
and from \eqref{volume condition} that 
\[
\alpha'(t)=  - \frac{1}{z-1} +  \varepsilon(z),
\]
where $\varepsilon(z)$ is  a function which converges uniformly to zero as $s \to \infty$. Keeping these in mind we may estimate \eqref{deri g} as
\[
g'(t) \leq \frac{2s}{z-1} - \frac{4 \e_0\, s}{z} + \delta  s
\leq \frac{2s(12-7z)}{5z(z-1)} + \delta s.
\]
Since $2-\delta \leq z \leq e^8$, the above inequality shows that $g'(t)<0$ when $\delta$ is chosen small enough. 
Hence, we conclude that $g$ is strictly decreasing. 

Recall that $g(a(s))>0$. Since $g$ is strictly decreasing, there is $t_0 \in (a(s), s  + \tfrac{8}{s})$ such that 
$g(t)>0$ for $t \in [a(s),t_0)$ and $g(t)<0$ for $t \in (t_0,s  + \tfrac{8}{s}]$. Therefore the function $f$ has no other 
local minimum on $[a(s), s  + \tfrac{8}{s}]$ than the end point $t = a(s)$. Indeed, if there were another local minimum on 
$(a(s), t_0]$ there would be at least one local maximum on  $(a(s), t_0)$. This  is impossible as the previous argument shows 
that $f''(t) >0$ at every critical point on $(a(s), t_0)$. Moreover, from $g(t)<0$  for $t \in (t_0,s  + \tfrac{8}{s}]$ 
we conclude that there are no local minimum points on $(t_0,s  + \tfrac{8}{s}]$. This completes the proof. 
\end{proof}

\bigskip

%%%%%%%%%%%%%%%%%%%%%%%%%%%%%%%%%%%%%%%%%%%%%%%%%%%%%%%%%%%%%%%%%%%%%%%%%%%%%
%%%%%%%%%%%%%%%%%%RINGRAZIAMENTI

\vspace{4pt}
\noindent

\section*{Acknowledgments}
\noindent
The first author was supported by INdAM and by the project VATEXMATE.
The second author was supported by the Academy of Finland grant 314227. 

\vspace{4pt}

%%%%%%%%%%%%%%%%%%%%%%%%%%%%%%%%%%%%%%%%%%%%%%%%%%%%%%%%%%%%%%%%%%%%%%%%%%%%%%%%%%%%%%%%%%%%%%%%%%%%%%%%%%%%%%%%%%%%


\begin{thebibliography}{40}

\bibitem{BL}
D. Bakry \& M. Ledoux.
L\'{e}vy-Gromov isoperimetric inequality for an infinite dimensional
diffusion generator.
{\em Invent. Math.} 123 (1995), 259--281.

\bibitem{Bar14}
M. Barchiesi, A. Brancolini \& V. Julin.
Sharp dimension free quantitative estimates for the Gaussian isoperimetric inequality.
{\em Ann. Probab.}, 45 (2017), 668--697.

\bibitem{Bar16}
M. Barchiesi \& V. Julin.
Robustness of the Gaussian concentration inequality and the Brunn-Minkowski inequality.
{\em Calc.~Var.~Partial Differential Equations}, 56 (2017), art. n. 80.

\bibitem{Barthe01}
F. Barthe, 
An isoperimetric result for the Gaussian measure and unconditional sets. 
{\em Bull. London Math. Soc.} 33 (2001),  408--416.

 \bibitem{BW}
J. Bernstein \& L. Wang.
A sharp lower bound for the entropy of closed hypersurfaces up to dimension six. 
{\em Invent. Math.} 206 (2016), 601--627.

\bibitem{BDF}
V. B\"ogelein, F. Duzaar \& N. Fusco.
A quantitative isoperimetric inequality on the sphere. 
 {\em Adv. Calc. Var.} 10 (2017), 223--265. 

\bibitem{Bor}
C. Borell.
The Brunn-Minkowski inequality in Gauss space.
{\em Invent. Math.}  30 (1975), 207--216.

\bibitem{CR}
A. Chakrabarti \& O. Regev.
An optimal lower bound on the communication complexity of gap-Hamming-distance.  
{\em STOC'11—Proceedings of the 43rd ACM Symposium on Theory of Computing}, 51–60, ACM, New York, 2011. 

\bibitem{CMT}
R. Choksi, C.B. Muratov \& I. Topaloglu, 
An old problem resurfaces nonlocally: Gamow's liquid drops inspire today's research and applications. 
{\em Notices Amer. Math. Soc.} 64 (2017),  1275--1283. 

\bibitem{CP}
R. Choksi \& M. Peletier.
Small volume fraction limit of the diblock copolymer problem: I. Sharp-interface functional. 
{\em SIAM J. Math. Anal.} 42 (2012), 1334--1370.
 
\bibitem{CFMP}
A. Cianchi, N. Fusco, F. Maggi \& A. Pratelli.
On the isoperimetric deficit in Gauss space.
{\em Amer. J. Math.} 133 (2011), 131--186.
 
\bibitem{CIMW}
T.H. Colding, T. Ilmanen, W. P.  Minicozzi, William P\& B. White.
The round sphere minimizes entropy among closed self-shrinkers. 
{\em J. Differential Geom.} 95 (2013), 53--69. 

\bibitem{CM}
T. H. Colding \& W. P. Minicozzi.
Generic mean curvature flow I: generic singularities.
{\em Ann. of Math.} 175 (2012), 755--833. 

\bibitem{Ehr}
A. Ehrhard.
Sym\'{e}trisation dans l'espace de Gauss.
{\em Math. Scand.} 53 (1983), 281--301.

\bibitem{F}
N. Fusco. 
The quantitative isoperimetric inequality and related topics.
{\em Bull. Math. Sci.} 5 (2015), 517–607. 

\bibitem{FJ}
N. Fusco \& V. Julin.
A strong form of the quantitative isoperimetric inequality.
{\em Calc. Var. Partial Differential Equations} 50 (2014), 925--937.

\bibitem{EL}
R. Eldan.
A two-sided estimate for the Gaussian noise stability deficit. 
{\em Invent. Math.} 201 (2015), 561--624.

\bibitem{OP}
Y. Filmus, H. Hatami, S. Heilman, E. Mossel.
R. O'Donnell, S. Sachdeva, A. Wan, \& K. Wimmer.
{\em Real Analysis in Computer Science:
A collection of Open Problems}, available online (2014).
%https://simons.berkeley.edu/sites/default/files/openprobsmerged.pdf 

\bibitem{Gamov}
G. Gamow. 
Mass defect curve and nuclear constitution.
{\em  Proc. R. Soc. Lond. A}, 126 (1930),  632--644.

\bibitem{Giusti}
E. Giusti.
{\em Minimal Surfaces and Functions of Bounded Variations.}
Birkh\"auser  (1994).

\bibitem{He}
S. Heilman.
Low Correlation Noise Stability of Symmetric Sets.
Preprint (2015).

\bibitem{He2}
S. Heilman.
Symmetric convex sets with minimal Gaussian surface area
Preprint (2017).

\bibitem{vesku}
V. Julin.
Isoperimetric problem with a Coulombic repulsive term.  
{\em  Indiana Univ. Math. J.}  63 (2014),  77--89. 

\bibitem{KM}
H. Kn\"upfer \&  C. B. Muratov.
On an isoperimetric problem with a competing nonlocal term II: The general case.
{\em Comm. Pure Appl. Math.}, 67 (2014), 1974--1994.

\bibitem{LaM}
D.A. La Manna.
Local Minimality of the ball for the Gaussian perimeter. Preprint (2017). 

\bibitem{LO}
R. Latala \& K. Oleszkiewicz.
Gaussian measures of dilatations of convex symmetric sets.
{\em Ann. Probab.}, 27 (1999), 1922--1938.

\bibitem{L}
H.B. Lawson. 
{\em Lectures on minimal submanifolds. Vol. I. Second edition.}
Mathematics Lecture Series, 9. Publish or Perish, Inc., Wilmington, Del., (1980).

%\bibitem{Le}
%M. Ledoux. 
%{\em Isoperimetry and Gaussian analysis}. 
%In {\em Lectures on probability theory and statistics}, 
%volume 1648 of Lecture Notes in Math., pages 165--294.
%Springer, Berlin, (1996).

\bibitem{Ma}
F. Maggi.
{\em Sets of finite perimeter and geometric variational problems. An introduction to geometric measure theory}.
Cambridge Studies in Advanced Mathematics, 135. Cambridge University Press, Cambridge (2012).

\bibitem{SuCi}
V. N. Sudakov \& B. S. Tsirelson.
Extremal properties of half-spaces for spherically invariant measures.
{\em Zap. Nau\v{c}n. Sem. Leningrad. Otdel. Mat. Inst. Steklov. (LOMI).} 41:14--24, 165 (1974).
Problems in the theory of probability distributions, II.

\bibitem{MN}
E. Mossel \& J. Neeman.
Robust Dimension Free Isoperimetry in Gaussian Space.
{\em Ann. Probab.} 43 (2015), 971--991.
 
\bibitem{MN2}
E. Mossel \& J. Neeman.
Robust optimality of Gaussian noise stability.
{\em J. Eur. Math. Soc.} 17 (2015), 433--482.

\bibitem{Ro}
C. Rosales.
Isoperimetric and stable sets for log-concave perturbatios of Gaussian measures.
{\em Anal. Geom. Metr. Spaces} 2 (2014), 2299--3274.

\bibitem{Zhu}
J. Zhu.
On the entropy of closed hypersurfaces and singular self-shrinkers.
Preprint (2016).

\end{thebibliography}
\end{document}